\definecolor{labelkey}{rgb}{0,0.08,0.45}
\definecolor{refkey}{rgb}{0,0.6,0.0}
\definecolor{myblue}{rgb}{.9, .9, 1}
\newtheorem{theorem}{Theorem}[section]
\newtheorem{lemma}[theorem]{Lemma}
\newtheorem{corollary}[theorem]{Corollary}
\newtheorem{proposition}[theorem]{Proposition}
\newtheorem{definition}[theorem]{Definition}
\theoremstyle{plain}{\theorembodyfont{\rmfamily}

\theoremstyle{plain}{\theorembodyfont{\rmfamily}
}
\theoremstyle{plain}{\theorembodyfont{\rmfamily}
}
\theoremstyle{plain}{\theorembodyfont{\rmfamily}
}
\theoremstyle{plain}{\theorembodyfont{\rmfamily}
}

\theoremstyle{plain}{\theorembodyfont{\rmfamily}
}

\theoremstyle{plain}{\theorembodyfont{\rmfamily}

\newenvironment{Calg}[1]
  {\customC}
  {\endcustomC}

\theoremstyle{plain}{\theorembodyfont{\rmfamily}

\newenvironment{variant}[1]
  {\customV}
  {\endcustomV}

\theoremstyle{plain}{\theorembodyfont{\rmfamily}


\def\RR{{\mathbb{R}}}

\def\NN{{\mathbb{N}}}

\def\mcF{{\mathcal{F}}}

\newcommand{\la}{\langle}
\newcommand{\ra}{\rangle}

\newcommand{\nexto}{\kern -0.54em}

\newcommand{\dZ}{{\cal Z \kern -0.7em Z}}
\newcommand{\dC}{{\rm\hbox{C \kern-0.8em\raise0.2ex\hbox{\vrule height5.4pt width0.7pt}}}}
\newcommand{\dQ}{{\rm\hbox{Q \kern-0.85em\raise0.25ex\hbox{\vrule height5.4pt width0.7pt}}}}

\newenvironment{retraitsimple}{\begin{list}{--~}{
 \topsep=0.3ex \itemsep=0.3ex \labelsep=0em \parsep=0em
 \listparindent=1em \itemindent=0em
 \settowidth{\labelwidth}{--~} \leftmargin=\labelwidth
}}{\end{list}}

\begin{document}
\title{A variant of forward-backward splitting method for the systems of inclusion problems}
\author{R. D\'iaz Mill\'an\footnote{ Federal Institute of Education, Science and
Technology, Goi\^ania, Brazil, e-mail: rdiazmillan@gmail.com}} \maketitle
\begin{abstract}
In this paper, we propose variants of forward-backward splitting method for solving the system of splitting  inclusion problem. We propose a conceptual algorithm containing three variants, each having a different projection steps. The algorithm consists in two parts, the first and main contains an explicit Armijo-type search in the spirit of the extragradient-like methods for variational inequalities. In the iterative process the operator forward-backward is computed only one time for each inclusion problem, this represent a great computational saving if we compare with Tseng's algorithm, because the computational cost of this operator is very high.   The second part of the scheme
consists in special projection steps. The convergence analysis of the proposed
scheme is given assu\-ming monotonicity on both operators, without assuming Lipschitz
continuity on the forward operators.

\bigskip

\noindent{\bf Keywords:} Armijo-type search,
Maximal monotone operators, Splitting methods, Systems of inclusion problems

\noindent{\bf Mathematical Subject Classification (2008):} 90C47,
49J35.
\end{abstract}
\section{Introduction}
First, we introduce a notation and some definitions. The inner product in
$\RR^n$ is denoted by $\la \cdot , \cdot \ra$ and the norm induced
by the inner product by $\|\cdot\|$. We denote by $2^{C}$ the power set of $C$. For $X$ a nonempty, convex and
closed subset of $\RR^n$, we define the orthogonal projection of $x$
onto $X$ by $P_X(x)$, as the unique point in $X$, such that $\|
P_X(x)-y\| \le \|x-y\|$ for all $y\in X$. Let $N_X(x)$
be the normal cone to $X$ at $x\in X$, i.e.,
$N_X(x)=\{d\in \RR^n\, : \, \la d,x-y\ra\ge 0 \;\; \forall
y\in X\}$. Recall that an operator $T:\RR^n \rightarrow 2^{\RR^n}$
is monotone if, for all $(x,u),(y,v)\in Gr(T):=\{(x,u)\in
\RR^n\times \RR^n : u\in T(x)\}$, we have $\la x-y, u-v \ra \ge 0,$
and it is maximal if $T$ has no proper monotone extension in the
graph inclusion sense.

In this paper, we propose a modified algorithm  for solving a system of splitting inclusion problem, for the sum of two operators. Given a finite family of pair of operators $\{ A_i,B_i \}_{i \in \mathbb{I}}$, with $\mathbb{I}=:(1,2,\cdots, m)$ and $m\in \NN$.
The system of inclusion problem consists in:
\begin{equation}\label{problema}
\mbox{Find} \ \ x\in \RR^n  \ \  \mbox{such that} \ \  0\in(A_i+B_i)(x) \ \ \mbox{for all} \ \ i\in \mathbb{I},
\end{equation}
where the operators $A_i:\RR^n \rightarrow \RR^n$ are point-to-point and monotone and the operators $B_i:\RR^n\rightarrow 2^{\RR^n}$ are point-to-set maximal monotone operators. The solution of the problem is given by the interception of the solution of each component of the system, i.e., $S_*=\cap_{i\in \mathbb{I}} S^i_*$, where $S^i_*$ is defined as $S^i_*:=\{x\in \RR^n: 0\in A_i(x)+B_i(x)\}$.

The problem \eqref{problema} is a generalization of the system of variational inequalities, introduced by I.V. Konnov in \cite{konnov1}, taking the operators $B_i = N_{C_i}$ for all $i\in \mathbb{I}$, which have been studied in \cite{konnov, gibali, gibali2, gibali3}. A generalization of this results have been studied in \cite{semenov,eslam}, where the hypothesis that all $A_i$ are Lipschitz continuous for all $i\in \mathbb{I}$, is assumed for the convergence analysis. In this paper we improve this result assuming only monotonicity  for all operators $A_i$, and maximal monotonicity for the operators $B_i$. Also, we improve the linesearch proposed by Tseng in \cite{tseng}, calculating in each tentative of find the step size, the operator forward-backward only one time for each inclusion problem of the system. This improves the algorithm in the computational sense, because this operator is very expensive to compute. The idea for this manuscript was motivated from the works \cite{rei-yun, phdthesis}.

Problem \eqref{problema} have many applications in operations research, mathematical physics, optimization and differential equations. This kind of problem have been deeply studied and has recently received a lot attention, due to the fact that many nonlinear problems, arising within applied
areas, are mathematically modeled as nonlinear operator system of equations and/or inclusions, which each one are decomposed as sum of two operators.
\section{Preliminaries}
In this section, we present some definitions and results needed for the
convergence analysis of the proposed algorithm. First, we state two well-known facts
on orthogonal projections.
\begin{proposition}\label{proj}
Let $X$ be any nonempty, closed and convex set in $\RR^n$, and $P_X$ the orthogonal projection onto $X$. For all $x,y\in \RR^n$ and all $z\in X $ the following hold:
\begin{enumerate}
\item $ \|P_{X}(x)-P_{X}(y)\|^2 \leq \|x-y\|^2-\|(P_{X}(x)-x)-\big(P_{X}(y)-y\big)\|^2.$
\item $\la x-P_X(x),z-P_X(x)\ra \leq 0.$
\item $P_X=(I+N_X)^{-1}.$
\end{enumerate}
\end{proposition}

\begin{proof}
 (i) and (ii) see    Lemma    $1.1$    and    $1.2$    in    \cite{zarantonelo}. (iii) See Proposition $2.3$ in \cite{bauch}.
\end{proof}

 In the following we state some useful results on maximal monotone operators.
\begin{lemma}\label{bound}
Let $T:dom(T)\subseteq \RR^n\rightarrow 2^{\RR^n}$ be a maximal monotone operator.
Then,
\begin{enumerate}
\item $Gr(T)$ is closed.
\item $T$ is bounded on bounded subsets of the interior of
its domain.
\end{enumerate}
\end{lemma}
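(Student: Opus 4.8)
The plan is to treat the two parts separately, invoking the maximality hypothesis only where it is genuinely needed; part (i) will rest on maximality, while part (ii) will use monotonicity together with finite-dimensionality.

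For (i) I would argue directly from the definition of maximality. Take any sequence $(x_k,u_k)\in Gr(T)$ with $(x_k,u_k)\to(x,u)$, with the goal of showing $(x,u)\in Gr(T)$. Fix an arbitrary $(y,v)\in Gr(T)$; monotonicity of $T$ gives $\la x_k-y,\,u_k-v\ra\ge 0$ for every $k$, and continuity of the inner product lets me pass to the limit to obtain $\la x-y,\,u-v\ra\ge 0$. Since $(y,v)\in Gr(T)$ was arbitrary, the pair $(x,u)$ is monotonically related to the entire graph of $T$. If $(x,u)$ did not already belong to $Gr(T)$, then $Gr(T)\cup\{(x,u)\}$ would be a monotone set properly extending $Gr(T)$, contradicting maximality; hence $(x,u)\in Gr(T)$ and $Gr(T)$ is closed. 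This is the one place where maximality is indispensable.

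For (ii) I would exploit compactness of the unit sphere of $\RR^n$ through a contradiction argument, which sidesteps the Baire-category machinery usually needed in general spaces. Fix $x_0\in\operatorname{int}\dom(T)$ and suppose $T$ is not bounded near $x_0$, so there exist $(x_k,u_k)\in Gr(T)$ with $x_k\to x_0$ and $\|u_k\|\to\infty$. Set $e_k:=u_k/\|u_k\|$; passing to a subsequence, $e_k\to e$ with $\|e\|=1$. For any fixed $(y,v)\in Gr(T)$, monotonicity gives $\la x_k-y,\,u_k-v\ra\ge 0$, and dividing by $\|u_k\|$ yields $\la x_k-y,\,e_k-v/\|u_k\|\ra\ge 0$. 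Letting $k\to\infty$ (the term $v/\|u_k\|\to 0$ precisely because $v$ is held fixed) produces $\la x_0-y,\,e\ra\ge 0$ for every $y\in\dom(T)$.

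The contradiction then comes from interiority: I choose $\delta>0$ with $B(x_0,\delta)\subseteq\dom(T)$ and test the admissible point $y=x_0+\tfrac{\delta}{2}e$, which gives $\la x_0-y,\,e\ra=-\tfrac{\delta}{2}\|e\|^2=-\tfrac{\delta}{2}<0$, contradicting the inequality just derived. Hence $T$ is locally bounded at every interior point of its domain. To upgrade pointwise local boundedness to boundedness on a bounded set $S$ whose closure lies in $\operatorname{int}\dom(T)$, I would cover the compact set $\overline{S}$ by finitely many balls on each of which $T$ is bounded and take the maximum of the bounds. The delicate step throughout is the limiting argument in (ii): one must normalize and divide by $\|u_k\|$ \emph{before} taking limits so that the fixed image $v$ is washed out, after which interiority of $x_0$ rules out the limiting direction $e$.
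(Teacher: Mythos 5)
Your proof is correct. The paper itself does not argue the lemma at all: it simply cites Proposition 4.2.1(ii) of Burachik--Iusem for (i) and Lemma 5(iii) of Bello Cruz--Iusem for (ii), so your self-contained argument is necessarily a different route. Part (i) is the standard maximality argument (a limit of graph points is monotonically related to the whole graph, hence lies in it), and part (ii) is the classical finite-dimensional local-boundedness proof: normalize the unbounded images, extract a limiting direction $e$, and contradict interiority by testing $y=x_0+\tfrac{\delta}{2}e$; the division by $\|u_k\|$ before passing to the limit, which you single out, is indeed the step that kills the fixed $v$. What your version buys is transparency and the observation that (ii) needs only monotonicity plus compactness of the unit sphere, not maximality or Baire category; what the citation buys the paper is brevity and validity in the stated generality. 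One small point worth making explicit: as literally stated, ``bounded on bounded subsets of the interior of its domain'' is slightly too strong (take $T=\partial f$ with $f(x)=-\sqrt{x}$ on $[0,1]$; then $T$ is unbounded on the bounded set $(0,1)\subset \operatorname{int}\dom(T)$). Your covering step correctly proves the standard version, namely boundedness on bounded sets whose \emph{closure} lies in $\operatorname{int}\dom(T)$, which is also the form actually used later in the paper; it would be worth noting that this is the version being established.
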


\begin{proof}
\begin{enumerate}
\item See Proposition $4.2.1$(ii) in \cite{iusem-regina}.
\item See Lemma 5(iii) in \cite{yu-iu}.
\end{enumerate}
\end{proof}

\begin{proposition}\label{inversa}
Let $T:dom(T)\subseteq\RR^n \rightarrow 2^{\RR^n}$ be a point-to-set and maximal monotone operator. Given $\beta >0$ then the operator $(I+\beta\, T)^{-1}: \RR^n \rightarrow dom(T)$ is single valued and maximal monotone.
\end{proposition}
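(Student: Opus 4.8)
The plan is to establish the three asserted properties in turn: single-valuedness, that the effective domain of the resolvent is all of $\RR^n$, and finally monotonicity together with maximality. Write $J:=(I+\beta\,T)^{-1}$ and fix $\beta>0$. First I would prove single-valuedness directly from the monotonicity of $T$. Suppose $u,v\in J(x)$ for some $x\in\RR^n$. By definition of the inverse this means $x\in u+\beta T(u)$ and $x\in v+\beta T(v)$, so that $\beta^{-1}(x-u)\in T(u)$ and $\beta^{-1}(x-v)\in T(v)$. Applying the monotonicity inequality to the pairs $(u,\beta^{-1}(x-u))$ and $(v,\beta^{-1}(x-v))$ in $Gr(T)$ gives $\la u-v,\,\beta^{-1}(x-u)-\beta^{-1}(x-v)\ra\ge 0$, which simplifies to $-\beta^{-1}\|u-v\|^2\ge 0$ and forces $u=v$. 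Hence $J$ is single-valued wherever it is defined.

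Next I would show that $J$ is defined on all of $\RR^n$, i.e. that $\ran(I+\beta\,T)=\RR^n$. This is the step I expect to be the main obstacle, since it is precisely Minty's surjectivity theorem for maximal monotone operators (see, e.g., \cite{bauch}): a monotone operator $T$ is maximal if and only if $\ran(I+\beta\,T)=\RR^n$ for one (equivalently, every) $\beta>0$. I would invoke this classical result, which is the only place where maximality of $T$ is used in an essential way — single-valuedness and monotonicity of $J$ follow from mere monotonicity, but the full-domain conclusion does not. This also justifies the codomain statement $J:\RR^n\to dom(T)$, the inclusion $\ran(J)\subseteq dom(T)$ being immediate from $\beta^{-1}(x-J(x))\in T(J(x))$.

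Finally, for monotonicity I would take $x,y\in\RR^n$, set $p=J(x)$ and $q=J(y)$, and use $\beta^{-1}(x-p)\in T(p)$ and $\beta^{-1}(y-q)\in T(q)$. Monotonicity of $T$ then yields $\la p-q,\,(x-y)-(p-q)\ra\ge 0$, i.e. $\la p-q,\,x-y\ra\ge\|p-q\|^2\ge 0$, so $J$ is monotone (in fact firmly nonexpansive, and by the Cauchy--Schwarz inequality nonexpansive, hence continuous). For maximality I would then invoke the standard fact (see, e.g., \cite{iusem-regina}) that a monotone operator which is everywhere defined on $\RR^n$ and continuous — here single-valued and nonexpansive — is automatically maximal monotone. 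Combining this with the full-domain conclusion of the previous paragraph completes the proof.
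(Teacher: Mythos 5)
Your argument is correct, but it is worth noting that the paper itself does not prove this proposition at all: its ``proof'' is a one-line citation to Theorem 4 of Minty's 1962 paper, which contains the entire statement. What you have done instead is reconstruct the proof so that the external input is isolated to a single step. You derive single-valuedness of $J=(I+\beta T)^{-1}$ and the firm nonexpansiveness inequality $\la J(x)-J(y),x-y\ra\ge\|J(x)-J(y)\|^2$ by elementary manipulations using only the monotonicity of $T$, you correctly observe that $\ran(J)\subseteq dom(T)$ is immediate from $\beta^{-1}(x-J(x))\in T(J(x))$, and you invoke Minty's surjectivity theorem only to get $\ran(I+\beta T)=\RR^n$, finishing with the standard fact that an everywhere-defined, continuous, single-valued monotone map on $\RR^n$ is maximal. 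This buys two things over the paper's citation: it makes explicit that maximality of $T$ is used \emph{only} for the full-domain conclusion (monotonicity alone already gives single-valuedness and nonexpansiveness on the range of $I+\beta T$), and it yields the stronger firm nonexpansiveness property as a byproduct, which is often useful downstream. The cost is that the heart of the matter, Minty's surjectivity theorem, is still cited rather than proved, so your argument is not more self-contained where it counts; it is, however, a more informative decomposition of the same result.
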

\begin{proof}
See Theorem $4$ in \cite{minty}.
\end{proof}
\begin{proposition}\label{parada}
Given $\beta>0$ and $A: dom(A)\subseteq \RR^n\to \RR^n$ be a monotone operator and $B: dom(B)\subseteq \RR^n\rightarrow 2^{\RR^n}$ be a maximal monotone operator, then
 $$x=(I+\beta B)^{-1}(I-\beta A)(x),$$ if and only if, $0\in (A+B)(x)$.
\end{proposition}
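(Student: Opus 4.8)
The plan is to exploit the defining property of the resolvent $(I+\beta B)^{-1}$, which by Proposition \ref{inversa} is single-valued and everywhere defined, and then unwind the set-valued inclusion step by step. Writing $J:=(I+\beta B)^{-1}$, the fundamental observation is that for any $w\in\RR^n$ and any $y\in\RR^n$ one has $y=J(w)$ if and only if $w\in(I+\beta B)(y)$, that is $w-y\in\beta B(y)$. This is nothing but the statement that $J$ inverts $I+\beta B$ in the graph sense, combined with the single-valuedness that makes the equality $y=J(w)$ unambiguous.

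With this equivalence recorded, I would set $w=(I-\beta A)(x)=x-\beta A(x)$ and $y=x$ and apply it directly. Thus $x=(I+\beta B)^{-1}(I-\beta A)(x)$ holds if and only if $(x-\beta A(x))-x\in\beta B(x)$, i.e.\ $-\beta A(x)\in\beta B(x)$. Because $A$ is point-to-point, $\beta A(x)$ is an ordinary vector and this membership is interpreted as containment of a single point in the set $\beta B(x)$.

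The final step is to cancel the strictly positive scalar $\beta$: since $\beta>0$, the set $\beta B(x)=\{\beta u : u\in B(x)\}$ satisfies $-\beta A(x)\in\beta B(x)$ if and only if $-A(x)\in B(x)$, which is equivalent to $0\in A(x)+B(x)=(A+B)(x)$. Reading the chain of equivalences in both directions yields the two implications at once, completing the proof.

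I do not anticipate any genuine obstacle: the statement is essentially a direct computation once the resolvent identity $y=J(w)\iff w-y\in\beta B(y)$ is in place. The only matters requiring a touch of care are invoking Proposition \ref{inversa} so that $J$ is single-valued (hence the equality $x=J(\cdot)$ is meaningful), and keeping track of the point-to-set nature of $B$ when rescaling the inclusion by $\beta$, so that the membership and its division by $\beta$ are read correctly.
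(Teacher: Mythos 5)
Your proof is correct. Note that the paper offers no argument of its own for this proposition---it simply cites Proposition $3.13$ of Eckstein's thesis \cite{PhD-E}---so your self-contained derivation is a genuine addition rather than a retracing of the paper's steps. The computation you give, namely the graph characterization $y=J(w)$ if and only if $w-y\in\beta B(y)$, followed by the substitution $w=x-\beta A(x)$, $y=x$, and cancellation of $\beta>0$, is precisely the standard argument underlying that citation, and you correctly flag the two points that need care: single-valuedness and full domain of the resolvent (via Proposition \ref{inversa} and maximality of $B$) so that the equality $x=J(\cdot)$ is meaningful, and the point-to-point nature of $A$ so that $-\beta A(x)\in\beta B(x)$ is an honest membership statement. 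One small observation: the monotonicity of $A$ plays no role in your argument (nor could it---the equivalence is purely algebraic), so the hypothesis that $A$ is monotone is not actually used here; it is only needed elsewhere in the paper.
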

\begin{proof}
See Proposition $3.13$ in \cite{PhD-E}.
\end{proof}

\noindent Now we define the so called Fej\'er convergence.
\begin{definition}
Let $S$ be a nonempty subset of $\RR^n$. A sequence $\{x^k\}_{k\in \NN}\subset \RR^n$ is said to be Fej\'er convergent to $S$, if and only if, for all $x\in S$ there exists $k_0\ge 0$, such that $\|x^{k+1}-x\| \le \|x^k - x\|$ for all $k\ge k_0$.
\end{definition}

This definition was introduced in \cite{browder} and have been
further elaborated in \cite{IST} and \cite{borw-baus}. A useful result on Fej\'er sequences is the following.

\begin{proposition}\label{punto}
If $\{x^k\}_{k\in \NN}$ is Fej\'er convergent to $S$, then:
\begin{enumerate}
\item the sequence $\{x^k\}_{k\in \NN}$ is bounded;
\item the sequence $\{\|x^k-x\|\}_{k\in \NN}$ is convergent for all $x\in S;$
\item if a cluster point $x^*$ belongs to $S$, then the sequence $\{x^k\}_{k\in \NN}$ converges to $x^*$.
\end{enumerate}
\end{proposition}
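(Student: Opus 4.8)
The plan is to derive all three conclusions from a single observation: for each fixed $x\in S$, the real sequence $\{\|x^k-x\|\}_{\kkk}$ is eventually monotone nonincreasing. Indeed, fixing $x\in S$, the definition of Fej\'er convergence supplies an index $k_0\ge 0$ with $\|x^{k+1}-x\|\le\|x^k-x\|$ for all $k\ge k_0$, so the tail of $\{\|x^k-x\|\}_{\kkk}$ is nonincreasing. Everything else is a short deduction from this fact together with the observation that altering finitely many initial terms affects neither boundedness nor convergence.

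I would begin with (ii), as it is the cleanest. The tail $\{\|x^k-x\|\}_{k\ge k_0}$ is nonincreasing and bounded below by $0$, hence convergent by the monotone convergence theorem; prepending the finitely many terms indexed by $k<k_0$ does not disturb the limit, so $\{\|x^k-x\|\}_{\kkk}$ converges for every $x\in S$. For (i), I would fix a single point $x\in S$ (which exists since $S\neq\emp$). The nonincreasing tail gives $\|x^k-x\|\le\|x^{k_0}-x\|$ for $k\ge k_0$, while the terms with $k<k_0$ are finite in number; setting $M:=\max\{\|x^0-x\|,\ldots,\|x^{k_0}-x\|\}$ yields $\|x^k-x\|\le M$ for all $\kkk$, and the triangle inequality $\|x^k\|\le\|x^k-x\|+\|x\|\le M+\|x\|$ then shows $\{x^k\}_{\kkk}$ is bounded.

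For (iii), suppose a cluster point $x^*$ belongs to $S$. By part (ii) applied with $x=x^*$, the sequence $\{\|x^k-x^*\|\}_{\kkk}$ converges to some $\ell\ge 0$. Since $x^*$ is a cluster point, there is a subsequence $x^{k_j}\to x^*$, so that $\|x^{k_j}-x^*\|\to 0$; but a subsequence of a convergent sequence must share its limit, forcing $\ell=0$. Hence $\|x^k-x^*\|\to 0$, i.e. $x^k\to x^*$.

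I do not expect a genuine obstacle, since each part is an elementary consequence of eventual monotonicity. The only point demanding care is the bookkeeping: the index $k_0$ depends on the chosen point $x\in S$, and one must pass from statements about the tail to statements about the whole sequence by absorbing the finitely many early terms, so that boundedness in (i) and convergence in (ii) hold for $\{x^k\}_{\kkk}$ itself rather than merely for a tail.
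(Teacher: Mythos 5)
Your proof is correct. Note, though, that the paper itself does not prove this proposition at all: its ``proof'' consists of citations to Proposition 5.4 and Theorem 5.5 of the Bauschke--Combettes monograph. What you have written is the standard self-contained argument that those references contain, and it is essentially the canonical one: eventual monotonicity of $\{\|x^k-x\|\}_{k\in\NN}$ for each fixed $x\in S$ gives (ii) via the monotone convergence theorem, (i) by bounding the tail by $\|x^{k_0}-x\|$ and absorbing the finitely many initial terms, and (iii) by noting that a convergent real sequence with a subsequence tending to $0$ must itself tend to $0$. One point in your favor worth making explicit: the paper's definition of Fej\'er convergence is the ``eventual'' one, in which the index $k_0$ is allowed to depend on the point $x\in S$, whereas the cited results in the literature are usually stated for the stronger definition requiring $\|x^{k+1}-x\|\le\|x^k-x\|$ for all $k$. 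Your bookkeeping with the $x$-dependent $k_0$ shows the proposition survives under the weaker hypothesis, so your argument actually covers the version the paper needs more faithfully than the bare citation does. No gaps.
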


\begin{proof}
(i) and (ii) See Proposition $5.4$ in \cite{librobauch}. (iii) See Theorem $5.5$ in \cite{librobauch}.
\end{proof}

\section{The Algorithm}\label{section3}
Let $A_i:\RR^n \rightarrow \RR^n$ and $B_i:\RR^n\rightarrow 2^{\RR^n} $ be maximal monotone operators, with $A_i$ point-to-point and $B_i$ point-to-set, for all $i\in \mathbb{I}$. Assume that $dom (B_i)\subseteq dom (A_i)$, for all $i \in \mathbb{I}:=\{1,2,3, \cdots, m\}$ with $m\in \NN$. Choose any nonempty, closed and convex set, $X \subseteq \cap_{i\in \mathbb{I}}dom (B_i)$, satisfying  $X\cap S^*\ne \emptyset$. Thus, from now on, the solution set, $S^*$, is nonempty. Also we assume that the operators $B_i$ for all $i\in \mathbb{I}$ satisfies, that for each bounded subset $V$ of $dom(B_i)$ there exists $R>0$, such that $B_i(x)\cap B[0,R]\neq\emptyset$, for all $x\in V$ and $i\in \mathbb{I}$ where $B[0,R]$ is the closed ball centered in $0$ and radius $R$. We emphasize that this assumption holds trivially if $dom(B_i)=\RR^n$ or $V\subset int(dom(B_i))$ or $B_i$ is the normal cone in any subset of $dom(B_i)$.

Let $\{\beta_k\}_{k=0}^{\infty}$ be a sequence such that $\{\beta_k\}_{k\in \NN}\subseteq [\check{\beta},\hat{\beta}] $ with $0<\check{\beta} \leq \hat{\beta}<\infty$, $\theta, \delta\in(0,1)$, and be $\mathbb{I}=\{1,2,3, \cdots, m\}$, $R>0$ like assumption above. The algorithm is defined as follows:

\begin{center}
\fbox{\begin{minipage}[b]{\textwidth}
\begin{Calg}{A}\label{concep} Let $\{\beta_k\}_{k\in \NN}, \theta, \delta, R \mbox{ and } \mathbb{I}$ like above.
\begin{retraitsimple}
\item[] {\bf Step~0 (Initialization):} Take $x^0\in X$.

\item[] {\bf Step~1 (Iterative Step 1):} Given $x^k$, compute for all $i\in \mathbb{I}$,
\begin{equation}{\label{jota}}
J_i(x^k,\beta_{k}):=(I+\beta_{k}B_i)^{-1}(I-\beta_{k}A_i)(x^{k}).
\end{equation}
\item[]{\bf Step~2 (Stopping Test 1):} Define $\mathbb{I}_k^*:=\{i\in \mathbb{I}: x^k=J_{i}(x^k,\beta_k)\}$. If $\mathbb{I}_k^*=\mathbb{I}$ stop.

\item[] {\bf Step~3 (Inner Loop):} Otherwise, for all $i\in \mathbb{I}\setminus \mathbb{I}_k^*$ begin the inner loop over $j$.
 Put $j=0$ and choose any $u_{(j,i)}^{k}\in B_i\big(\theta^{j}J_i(x^{k},\beta_k)+(1-\theta^{j})x^k\big)\cap B[0,R]$. If
\begin{equation}\label{jk}
\Big \la A_i \big(\theta^{j}J_i(x^{k},\beta_k)+(1-\theta^{j})x^k\big)+u^{k}_{(j,i)}, x^k-J_i(x^k,\beta_k)\Big \ra\geq \frac{\delta}{\beta_k}\|x^k -J_i(x^k,\beta_k)\|^2,
\end{equation}
then $j_i(k):=j$ and stop.
Else, $j=j+1$.
\item[] {\bf Step~4 (Iterative Step 2):} Set for all $i\in \mathbb{I}\setminus \mathbb{I}_k^*$

\begin{equation}{\label{alphak}}
\alpha_{k,i}:=\theta^{j_i(k)},
\end{equation}
\begin{equation}{\label{ubar}}
\bar{u}_i^k:=u^k_{j_i(k)}
\end{equation}
\begin{equation}{\label{xbar}}
\bar{x}_i^k:=\alpha_{k,i} J_{i}(x^k,\beta_k)+(1-\alpha_{k,i})x^k
\end{equation}
and
\begin{equation}{\label{Fk}}
x^{k+1}:=\mathcal{F}_A(x^k).
\end{equation}
\item[] {\bf Step~5 (Stop Criteria 2):} If $x^{k+1}=x^k$ then stop. Otherwise, set $k\leftarrow k+1$ and go to {\bf Step~1}.
\end{retraitsimple}
\end{Calg}\end{minipage}}
\end{center}

\noindent We consider three variants of this algorithm. Their main difference lies in the computation \eqref{Fk}:
\begin{align}
\mcF_{\rm\ref{A1}}(x^k) =&P_X\big(P_{H_k}(x^k)\big) ;\label{P112}  \quad   &{(\bf Variant\; \ref{A1})} \\
\mcF_{\rm\ref{A2}}(x^k) =&P_{X\cap H_k}(x^k) ;\label{P122}  \quad   &{(\bf Variant\; \ref{A2})}\\
\mcF_{\rm\ref{A3}}(x^k) =&P_{X\cap H_k\cap W(x^k)}(x^0) ;\label{P132}  \quad   & {(\bf Variant\; \ref{A3})}
\end{align}
where
\begin{equation}\label{hk} H_k:=\cap_{i\in \mathbb{I}\setminus \mathbb{I}_k^*}H_i( \bar{x}_{i}^{k},\bar{u}_i^k)\end{equation}
\begin{equation}\label{H(x)}
H_i(x,u) := \big\{ y\in \RR^n :\la A_i(x)+u,y-x\ra\le 0\big \}
\end{equation}
and
\begin{equation}\label{W(x)}
W(x) := \big\{ y\in \RR^n : \la y-x,x^0-x\ra\le 0\big \}.
\end{equation}
This kind of hyperplane have been used in some works, see \cite{yuniusem,sva}.

\section{Convergence Analysis}\label{section4}
In this section we analyze the convergence of the algorithms presented in the previous section. First, we present some general properties as well as prove the well-definition of the conceptual algorithm.

\begin{lemma}\label{propseq}
For all $(x,u)\in Gr(B_i)$, $S_i^*\subseteq H_i(x,u)$, for all $i\in \mathbb{I}$. Therefore $S^* \subset H_i(x,u)$ for all $i \in \mathbb{I}$.
\end{lemma}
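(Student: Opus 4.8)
The plan is to unwind the definitions of $S_i^*$ and $H_i(x,u)$ and then exploit the monotonicity of $A_i$ and $B_i$ \emph{simultaneously}. Fix $i \in \mathbb{I}$, a pair $(x,u) \in Gr(B_i)$, and an arbitrary $z \in S_i^*$. By \eqref{H(x)}, showing $z \in H_i(x,u)$ amounts to verifying the single scalar inequality $\la A_i(x) + u, z - x \ra \le 0$, so the entire argument reduces to producing this one estimate.

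The first step is to convert the membership $z \in S_i^*$ into a graph statement usable with monotonicity. Since $0 \in A_i(z) + B_i(z)$ and $A_i$ is point-to-point, this is exactly $-A_i(z) \in B_i(z)$, i.e. $(z, -A_i(z)) \in Gr(B_i)$. I then have two points of $Gr(B_i)$ at my disposal, namely the given $(x,u)$ and the newly identified $(z, -A_i(z))$, which is the pairing I will feed into the monotonicity inequality.

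The second step applies monotonicity twice and adds the results. Monotonicity of $B_i$ on the pairs $(x,u)$ and $(z,-A_i(z))$ gives $\la x - z, u + A_i(z) \ra \ge 0$, while monotonicity of the point-to-point operator $A_i$ on $x$ and $z$ gives $\la x - z, A_i(x) - A_i(z) \ra \ge 0$. Summing these two inequalities cancels the $A_i(z)$ terms and yields $\la x - z, A_i(x) + u \ra \ge 0$, which is precisely $\la A_i(x)+u, z-x \ra \le 0$. Hence $z \in H_i(x,u)$, and since $z \in S_i^*$ was arbitrary, $S_i^* \subseteq H_i(x,u)$.

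The final claim is immediate from the first: because $S^* = \cap_{i \in \mathbb{I}} S_i^* \subseteq S_i^*$, the inclusion $S^* \subset H_i(x,u)$ follows for every $i$. I do not anticipate a genuine obstacle here; the only point requiring care is recognizing that adding the two monotonicity inequalities is exactly what cancels the unknown quantity $A_i(z)$, so the crux is choosing the correct pairing $(z, -A_i(z))$ that makes this cancellation occur.
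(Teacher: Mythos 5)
Your proof is correct and essentially identical to the paper's: the paper applies monotonicity of the sum $A_i+B_i$ to the pairs $(x,A_i(x)+u)$ and $(x^*,A_i(x^*)+v^*)$ with $0=A_i(x^*)+v^*$, which is exactly the sum of your two separate monotonicity inequalities for $A_i$ and $B_i$. The only cosmetic difference is that you make the additive decomposition explicit rather than citing monotonicity of the sum directly.
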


\begin{proof}
Take $x^{*}\in S_i^*$. Using the definition of the solution, there exists $v^{*}\in B_i(x^{*})$, such that $0=A_i(x^{*})+v^{*}$. By the monotonicity of $A_i+B_i$, we have
$$\la A_i(x)+u -(A_i(x^{*})+v^{*}), x-x^{*}\ra\ge 0, $$
 for all $(x,u)\in Gr(B_i)$.
Hence,
$$\la A_i(x)+u, x^{*}-x\ra \le 0$$
and by \eqref{H(x)}, $x^{*}\in H_i(x,u)$.
\end{proof}

\noindent From now on, $\{x^k\}_{k\in \NN}$ is the sequence generated by the conceptual algorithm.
\begin{proposition}\label{propdef}
 The conceptual algorithm  is well-defined.
\end{proposition}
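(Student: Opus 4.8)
The plan is to verify the three ingredients that make Conceptual Algorithm~\ref{concep} meaningful: that each $J_i(x^k,\beta_k)$ in \eqref{jota} is a well-defined single point, that the inner loop of Step~3 terminates after finitely many trials $j$, and that the projection $\mathcal{F}_A(x^k)$ in \eqref{Fk} exists and is unique for each of the three variants \eqref{P112}--\eqref{P132}. The first ingredient is immediate: by Proposition~\ref{inversa} the resolvent $(I+\beta_k B_i)^{-1}$ is single valued with range in $\dom(B_i)$, so $J_i(x^k,\beta_k)$ is uniquely determined and lies in $\dom(B_i)$; the existence of the trial multipliers $u^k_{(j,i)}\in B_i(\cdot)\cap B[0,R]$ is guaranteed by the standing boundedness assumption on the $B_i$, since the trial points lie in a bounded subset of $\dom(B_i)$. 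Since all three variants share Steps~0--3, the termination argument is variant-independent.

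The main obstacle is the finite termination of the Armijo-type inner loop. I would abbreviate $J_i:=J_i(x^k,\beta_k)$ and first record the resolvent inclusion: from \eqref{jota}, $x^k-\beta_k A_i(x^k)\in (I+\beta_k B_i)(J_i)$, hence
\[
w_i:=\tfrac{1}{\beta_k}\big(x^k-J_i\big)-A_i(x^k)\in B_i(J_i).
\]
Then I argue by contradiction: suppose that for some $i\in\mathbb{I}\setminus\mathbb{I}_k^*$ the test \eqref{jk} fails for every $j$, and set $z_j:=\theta^{j}J_i+(1-\theta^{j})x^k$, so $z_j\to x^k$ as $j\to\infty$. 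Since $u^k_{(j,i)}\in B[0,R]$, the sequence $\{u^k_{(j,i)}\}_j$ is bounded and admits a subsequence converging to some $\bar u$; by Lemma~\ref{bound}(i) the graph of $B_i$ is closed, and $(z_j,u^k_{(j,i)})\in Gr(B_i)$ with $z_j\to x^k$ forces $\bar u\in B_i(x^k)$. Using the continuity of the single-valued maximal monotone operator $A_i$ on its full domain $\RR^n$, we have $A_i(z_j)\to A_i(x^k)$, so passing to the limit in the negation of \eqref{jk} gives $\la A_i(x^k)+\bar u,\,x^k-J_i\ra\le \tfrac{\delta}{\beta_k}\|x^k-J_i\|^2$. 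On the other hand, monotonicity of $B_i$ applied to $\bar u\in B_i(x^k)$ and $w_i\in B_i(J_i)$ yields $\la \bar u-w_i,x^k-J_i\ra\ge 0$, whence $\la A_i(x^k)+\bar u,\,x^k-J_i\ra\ge \la A_i(x^k)+w_i,x^k-J_i\ra=\tfrac{1}{\beta_k}\|x^k-J_i\|^2$. Combining the two estimates gives $(1-\delta)\|x^k-J_i\|^2\le 0$; since $\delta\in(0,1)$ this forces $x^k=J_i$, contradicting $i\notin\mathbb{I}_k^*$. Hence the loop stops at a finite $j_i(k)$ and Steps~4--5 are reachable.

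Finally, for the projection step I would note that each $H_i(\bar x_i^k,\bar u_i^k)$ in \eqref{H(x)} and each $W(x^k)$ in \eqref{W(x)} is a half-space, hence closed and convex, and $X$ is closed and convex by hypothesis; therefore $H_k$ in \eqref{hk}, $X\cap H_k$, and $X\cap H_k\cap W(x^k)$ are closed convex, so the orthogonal projection onto each is unique whenever the set is nonempty. Nonemptiness follows from Lemma~\ref{propseq}: since $\bar u_i^k\in B_i(\bar x_i^k)$, we have $S^*\subseteq H_i(\bar x_i^k,\bar u_i^k)$ for every $i$, so $S^*\subseteq H_k$; combined with the standing assumption $X\cap S^*\ne\emptyset$, this gives $X\cap S^*\subseteq X\cap H_k$, which is nonempty, settling Variants~\eqref{P112} and \eqref{P122}. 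For Variant~\eqref{P132} I would close an induction on $k$ with invariant $X\cap S^*\subseteq X\cap H_k\cap W(x^k)$: the base case holds because $W(x^0)=\RR^n$, and at the inductive step Proposition~\ref{proj}(ii) applied to $x^{k+1}=P_{X\cap H_k\cap W(x^k)}(x^0)$ shows that the whole feasible set $X\cap H_k\cap W(x^k)$ lies in $W(x^{k+1})$, so $X\cap S^*\subseteq W(x^{k+1})$, which together with $X\cap S^*\subseteq X\cap H_{k+1}$ propagates the invariant and keeps every projection set nonempty. The only delicate points beyond routine bookkeeping are the continuity of $A_i$ and the closed-graph limit used in the contradiction argument; everything else is a direct application of the quoted propositions.
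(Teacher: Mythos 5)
Your proposal is correct and the core argument --- the contradiction proof that the Armijo-type inner loop terminates, via boundedness of the multipliers, closedness of $Gr(B_i)$, the resolvent identity for $J_i$, and monotonicity of $B_i$ yielding $(1-\delta)\|x^k-J_i\|^2\le 0$ --- is essentially identical to the paper's proof. The additional material you include (single-valuedness of the resolvent, existence of the trial multipliers, and nonemptiness/convexity of the projection sets, including the induction for Variant A.3) is sound but is handled by the paper elsewhere, e.g.\ in Lemma \ref{lemma:3} and Corollary \ref{l:well-definedness}.
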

\begin{proof}
By Proposition \ref{parada},  Stop Criteria $1$ is well-defined. The proof of the well-definition of $j_i(k)$ is by contradiction. Fix $i\in \mathbb{I}\setminus \mathbb{I}_k^*$ and assume that for all $j\ge0$ having chosen $u_{(j,i)}^{k}\in B_i\big(\theta^j J_i(x^k,\beta_k)+(1-\theta^j)x^k\big)\cap B[0,R]$,
\begin{equation*}
\Big\la A_i \big(\theta^{j}J_i(x^{k},\beta_k)+(1-\theta^{j})x^k\big)+u^{k}_{j}, x^k-J_i(x^k,\beta_k)\Big\ra < \frac{\delta}{\beta_k}\|x^k - J_i(x^k,\beta_k)\|^2.
\end{equation*}
Since the sequence $\{u^{k}_{(j,i)}\}_{j=0}^{\infty}$ is bounded, there exists a subsequence $\{u^{k}_{(\ell_j,i)}\}_{j=0}^{\infty}$ of $\{u^{k}_{(j,i)}\}_{j=0}^{\infty}$, which converges to an element $ u_i^k$ belonging to $B_i(x^k)$ by maximality. Taking the limit over the subsequence $\{\ell_j\}_{j\in \NN}$, we get
\begin{equation}{\label{lim}}
\big\la\beta_k A_i(x^k)+\beta_k u_i^k, x^k -J_i(x^k,\beta_k)\big \ra \le \delta \|x^k - J_i(x^k,\beta_k)\|^2.
\end{equation}
It follows from (\ref{jota}) that
\begin{equation*}{\label{res}}
 \beta_k A_i(x^k)=x^k-J_i(x^k,\beta_k)-\beta_k v_i^k,
 \end{equation*}
 for some  $ v_i^{k}\in B_i(J_i(x^k,\beta_k))$.\\
Now, the above equality together with (\ref{lim}), lead to
\begin{equation*}
\|x^k - J_i(x^k,\beta_k)\|^2\le\Big\la x^k-J_i(x^k,\beta_k)-\beta_k v_i^k+\beta_k u_i^k, x^k -J_i(x^k,\beta_k)\Big \ra \le \delta \|x^k - J_i(x^k,\beta_k)\|^2,
\end{equation*}
using the monotonicity of $B_i$ for the first inequality. So,
$$(1-\delta)\|x^k - J_i(x^k,\beta_k)\|^2\le 0,$$
which contradicts that $i\in \mathbb{I}\setminus \mathbb{I}_k^*$. Thus, the conceptual algorithm is well-defined.
\end{proof}

\begin{proposition}\label{H-separa-x} $x^k \in H_k$  if and only if, $x^k\in S^*$.
\end{proposition}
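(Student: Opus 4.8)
The plan is to prove both implications by reducing each to whether the index set $\mathbb{I}\setminus\mathbb{I}_k^*$ is empty, and to make the reverse direction rest on a single algebraic comparison between the hyperplane inclusion and the linesearch acceptance inequality \eqref{jk}.

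For the implication $x^k\in S^*\Rightarrow x^k\in H_k$, I would first invoke Proposition \ref{parada}: membership $x^k\in S_i^*$ means $0\in(A_i+B_i)(x^k)$, which is equivalent to the fixed-point identity $x^k=J_i(x^k,\beta_k)$. Since $x^k\in S^*=\cap_{i\in\mathbb{I}}S_i^*$, this holds for every $i\in\mathbb{I}$, so $\mathbb{I}_k^*=\mathbb{I}$ and $\mathbb{I}\setminus\mathbb{I}_k^*=\emp$. By the definition \eqref{hk}, $H_k$ is then an empty intersection, i.e. all of $\RR^n$, and $x^k\in H_k$ is immediate.

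For the converse, I would assume $x^k\in H_k$ and argue by contradiction that $\mathbb{I}_k^*=\mathbb{I}$. Suppose instead there is an index $i\in\mathbb{I}\setminus\mathbb{I}_k^*$, so $x^k\neq J_i(x^k,\beta_k)$. Since $x^k\in H_k\subseteq H_i(\bar{x}_i^k,\bar{u}_i^k)$, the definition \eqref{H(x)} yields $\la A_i(\bar{x}_i^k)+\bar{u}_i^k,\, x^k-\bar{x}_i^k\ra\le 0$. The key step is to rewrite the displacement using \eqref{xbar}, namely $x^k-\bar{x}_i^k=\alpha_{k,i}\big(x^k-J_i(x^k,\beta_k)\big)$; since $\alpha_{k,i}=\theta^{j_i(k)}>0$ by \eqref{alphak}, dividing gives $\la A_i(\bar{x}_i^k)+\bar{u}_i^k,\, x^k-J_i(x^k,\beta_k)\ra\le 0$. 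This directly contradicts the Armijo inequality \eqref{jk} at $j=j_i(k)$, which guarantees $\la A_i(\bar{x}_i^k)+\bar{u}_i^k,\, x^k-J_i(x^k,\beta_k)\ra\ge\frac{\delta}{\beta_k}\|x^k-J_i(x^k,\beta_k)\|^2>0$, the strict positivity coming from $x^k\neq J_i(x^k,\beta_k)$ together with $\delta,\beta_k>0$. Hence no such $i$ exists, $\mathbb{I}_k^*=\mathbb{I}$, and Proposition \ref{parada} once more converts each fixed-point identity into $0\in(A_i+B_i)(x^k)$, giving $x^k\in S^*$.

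The only delicate point is the empty-intersection convention: the statement hinges on reading $H_k$ as $\RR^n$ when $\mathbb{I}\setminus\mathbb{I}_k^*=\emp$, so that for a terminating iterate the constraint carries no information and membership is automatic, while for a genuinely active iterate the hyperplane at $\bar{x}_i^k$ is incompatible with the acceptance step. Apart from this bookkeeping, the argument is a short matching of the inclusion against the linesearch inequality, so I expect no substantial obstacle.
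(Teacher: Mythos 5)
Your proof is correct, but it is organized differently from the paper's. The paper disposes of both implications in one line by citing Proposition 4.2 of \cite{rei-yun} componentwise: that external result already packages the equivalence $x^k\in H_i(\bar{x}_i^k,\bar{u}_i^k)\Leftrightarrow x^k\in S_i^*$ for a single pair $(A_i,B_i)$, so the system case is just an intersection over $i$. You instead give a self-contained argument, and your reverse direction is exactly the right mechanism: membership in $H_i(\bar{x}_i^k,\bar{u}_i^k)$ forces $\la A_i(\bar{x}_i^k)+\bar{u}_i^k,\,x^k-\bar{x}_i^k\ra\le 0$, while the accepted linesearch step \eqref{jk} together with $x^k-\bar{x}_i^k=\alpha_{k,i}\bigl(x^k-J_i(x^k,\beta_k)\bigr)$ and $\alpha_{k,i}>0$ forces that same inner product to be strictly positive whenever $i\notin\mathbb{I}_k^*$ (this is precisely the content of \eqref{desig-muy-usada} in Corollary \ref{coro}); the contradiction then hands the conclusion to Proposition \ref{parada}. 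That buys independence from the external reference at the cost of a few lines. The one place where your route is needlessly fragile is the forward direction: you make it hinge on $\mathbb{I}_k^*=\mathbb{I}$ and the convention that an empty intersection equals $\RR^n$, whereas Lemma \ref{propseq} (already proved in the paper from monotonicity of $A_i+B_i$) gives $S^*\subset H_i(x,u)$ for \emph{every} $(x,u)\in Gr(B_i)$, hence $S^*\subset H_k$ directly, with no case distinction and no convention. I would replace your forward direction by that one-line appeal to Lemma \ref{propseq} and keep your reverse direction as is.
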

\begin{proof}
If $x^k\in H_k$ then $x^k\in H_i(\bar{x}_i^k,\bar{u}_i^k)$ for all $i \in \mathbb{I}\setminus \mathbb{I}_k^*$  by definition of $H_k$. Now by Proposition (4.2) of \cite{rei-yun} we have that $x^k \in S_i^*$ for all $i \in \mathbb{I}$, then $x^k\in S^*$. Conversely, if $x^k \in S^*$ then $x^k\in S_i^*$ then $x^k\in H_i(\bar{x}_i^k,\bar{u}_i^k)$ for all $i \in \mathbb{I}$ using the same proposition, implying that $x^k\in H_k$.
\end{proof}
Finally, a useful algebraic property on the sequence generated by the conceptual algorithm, which is a direct consequence of the inner loop and \eqref{xbar}.
\begin{corollary}\label{coro}
Let $\{x^k\}_{k\in \NN}$, $\{\beta_k\}_{k\in \NN}$ and $\{\alpha_{(k,i)}\}_{k\in \NN}$ be sequences generated by the conceptual algorithm. With $\delta$ and $\hat{\beta}$ as in the conceptual algorithm. Then,
\begin{equation}\label{desig-muy-usada}
\la A_i(\bar{x}_i^{k})+\bar{u}_i^{k},x^{k}-\bar{x}_i^{k} \ra  \ge\frac{\alpha_{k,i}\delta}{\hat{\beta}}\|x^{k}-J_i(x^{k},\beta_{k})\|^2\geq0,
\end{equation}
for all $k$.
\end{corollary}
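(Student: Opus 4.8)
The plan is to read \eqref{desig-muy-usada} as a direct transcription of the inner-loop acceptance test \eqref{jk}, rewritten through the affine definition \eqref{xbar} of $\bar{x}_i^k$. Throughout I would fix $k$ and work with an index $i\in\mathbb{I}\setminus\mathbb{I}_k^*$, so that the quantities $\alpha_{k,i}$, $\bar{u}_i^k$ and $\bar{x}_i^k$ produced in Step~4 are defined; for $i\in\mathbb{I}_k^*$ one has $x^k=J_i(x^k,\beta_k)$, hence $\bar{x}_i^k=x^k$ and both sides of \eqref{desig-muy-usada} vanish trivially.

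First I would note that the inner loop halts at $j=j_i(k)$ precisely because the test \eqref{jk} holds for that index. By \eqref{alphak}, \eqref{ubar} and \eqref{xbar}, the arguments appearing in \eqref{jk} at $j=j_i(k)$ are exactly $\theta^{j_i(k)}J_i(x^k,\beta_k)+(1-\theta^{j_i(k)})x^k=\bar{x}_i^k$ and $u^k_{(j_i(k),i)}=\bar{u}_i^k$. Thus \eqref{jk} becomes
$$\la A_i(\bar{x}_i^k)+\bar{u}_i^k,\,x^k-J_i(x^k,\beta_k)\ra\ \ge\ \frac{\delta}{\beta_k}\,\|x^k-J_i(x^k,\beta_k)\|^2 .$$

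Second, I would convert the displacement $x^k-J_i(x^k,\beta_k)$ on the left into $x^k-\bar{x}_i^k$. A one-line computation from \eqref{xbar} gives $x^k-\bar{x}_i^k=\alpha_{k,i}\,(x^k-J_i(x^k,\beta_k))$, since $\bar{x}_i^k=\alpha_{k,i}J_i(x^k,\beta_k)+(1-\alpha_{k,i})x^k$. Substituting this into the inner product and pulling out the positive scalar $\alpha_{k,i}=\theta^{j_i(k)}>0$ turns the last display into
$$\la A_i(\bar{x}_i^k)+\bar{u}_i^k,\,x^k-\bar{x}_i^k\ra\ \ge\ \frac{\alpha_{k,i}\,\delta}{\beta_k}\,\|x^k-J_i(x^k,\beta_k)\|^2 .$$

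Finally, I would weaken the denominator using the standing hypothesis $\beta_k\le\hat{\beta}$, which replaces $1/\beta_k$ by the no-larger quantity $1/\hat{\beta}$ and yields the stated bound $\tfrac{\alpha_{k,i}\delta}{\hat{\beta}}\|x^k-J_i(x^k,\beta_k)\|^2$; the trailing inequality $\ge0$ is immediate since $\alpha_{k,i}>0$, $\delta\in(0,1)$, $\hat{\beta}>0$ and the norm is nonnegative. There is no substantive obstacle here: the whole argument is bookkeeping, and the only point demanding care is keeping track of which displacement vector ($x^k-J_i(x^k,\beta_k)$ versus $x^k-\bar{x}_i^k$) sits in each inner product, so that the factor $\alpha_{k,i}$ is introduced exactly once.
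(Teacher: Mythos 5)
Your proposal is correct and follows exactly the route the paper intends: the paper gives no explicit proof, stating only that the corollary is ``a direct consequence of the inner loop and \eqref{xbar}'', and your argument---substituting $\bar{x}_i^k$, $\bar{u}_i^k$ into the accepted test \eqref{jk}, using $x^k-\bar{x}_i^k=\alpha_{k,i}(x^k-J_i(x^k,\beta_k))$ to introduce the factor $\alpha_{k,i}$, and then bounding $1/\beta_k\ge 1/\hat{\beta}$---is precisely that computation, carried out carefully.
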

\subsection{Convergence analysis of Variant \ref{A1}}\label{sec-5.1}
In this section, all results are for {\bf Variant \ref{A1}}, which is summarized below.

\vspace*{-0.2in}
\begin{center}\fbox{\begin{minipage}[b]{\textwidth}
\begin{variant}{A.1}
\label{A1}$x^{k+1}=\mcF_{\rm\ref{A1}}(x^k)=P_X\big(P_{H_k}(x^k)\big)$
\end{variant}\end{minipage}}\end{center}
\begin{proposition}\label{stop1}
If {\bf Variant \ref{A1}} stops, then $x^k\in S^*$.
\end{proposition}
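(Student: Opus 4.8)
The plan is to split the argument according to the two ways in which Variant~\ref{A1} can terminate: the Stopping Test at Step~2, where $\mathbb{I}_k^*=\mathbb{I}$, and the Stop Criteria at Step~5, where $x^{k+1}=x^k$. If the algorithm halts at Step~2, then by definition $x^k=J_i(x^k,\beta_k)=(I+\beta_k B_i)^{-1}(I-\beta_k A_i)(x^k)$ for every $i\in\mathbb{I}$. Applying Proposition~\ref{parada} with $\beta=\beta_k$, $A=A_i$ and $B=B_i$ turns each of these fixed-point identities into $0\in(A_i+B_i)(x^k)$, so $x^k\in S_i^*$ for all $i$ and hence $x^k\in\cap_{i\in\mathbb{I}}S_i^*=S^*$. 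This case is immediate.

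The substantive case is termination at Step~5. Here I would reduce the goal to showing $x^k\in H_k$, since Proposition~\ref{H-separa-x} then yields $x^k\in S^*$ at once. To set this up, fix a reference point $x^*\in X\cap S^*$, which exists by the standing assumption $X\cap S^*\neq\emptyset$. Because each pair $(\bar{x}_i^k,\bar{u}_i^k)$ lies in $Gr(B_i)$ by the construction of the inner loop, Lemma~\ref{propseq} gives $x^*\in H_i(\bar{x}_i^k,\bar{u}_i^k)$ for every $i\in\mathbb{I}\setminus\mathbb{I}_k^*$, so $x^*\in H_k$. Thus $x^*$ is a common fixed point of both projections appearing in $\mcF_{\rm\ref{A1}}$, that is, $P_{H_k}(x^*)=x^*$ and $P_X(x^*)=x^*$.

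The decisive step is to telescope the two projections. Writing $z:=P_{H_k}(x^k)$, so that $x^{k+1}=P_X(z)$, I would apply the firmly nonexpansive estimate of Proposition~\ref{proj}(i) once to $P_{H_k}$ and once to $P_X$, in each case using that $x^*$ is fixed, to obtain
\begin{equation*}
\|x^{k+1}-x^*\|^2\le \|x^k-x^*\|^2-\|z-x^k\|^2-\|x^{k+1}-z\|^2 .
\end{equation*}
Substituting the stopping condition $x^{k+1}=x^k$ cancels the left-hand side against $\|x^k-x^*\|^2$ and leaves $0\le-2\|z-x^k\|^2$, forcing $z=x^k$. Hence $P_{H_k}(x^k)=x^k$, i.e. $x^k\in H_k$, and Proposition~\ref{H-separa-x} completes the proof.

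The only point demanding care is the double-projection estimate: one must recognize that the squared-distance decreases contributed by the two consecutive orthogonal projections are additive, which is precisely what the two applications of Proposition~\ref{proj}(i) deliver once $x^*\in X\cap H_k$ is identified as a common fixed point. Everything else is a direct appeal to the preliminary results.
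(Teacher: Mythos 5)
Your proof is correct and follows essentially the same route as the paper: both arguments reduce the substantive case to showing $P_{H_k}(x^k)=x^k$, hence $x^k\in H_k$, and then invoke Proposition~\ref{H-separa-x}. The only cosmetic difference is that you obtain the cancellation from two applications of the firm nonexpansiveness in Proposition~\ref{proj}(i) anchored at a point of $X\cap S^*$, whereas the paper sums the two obtuse-angle inequalities of Proposition~\ref{proj}(ii) at an arbitrary point of $X\cap H_k$; your explicit treatment of termination at Step~2 via Proposition~\ref{parada} is a small addition that the paper leaves implicit.
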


\begin{proof}
If Stop Criteria $2$ is satisfied, $x^{k+1}=P_X\big(P_{H_k}(x^k)\big)=x^k$. Using Proposition \ref{proj}(ii), we have
\begin{equation}\label{proyex}
\la P_{H_k}(x^k)-x^k, z-x^k\ra \leq 0,
\end{equation} for all $z\in X$. Now using Proposition \ref{proj}(ii),
\begin{equation}\label{proyeh}
\la P_{H_k}(x^k)-x^k, P_{H_k}(x^k)-z\ra \leq 0,
\end{equation} for all $z\in H_k$.
Since $X\cap H_k \neq \emptyset$ summing \eqref{proyex} and \eqref{proyeh}, with $z\in X\cap H_k$, we get
\begin{equation*}
\|x^k-P_{H_k}(x^k)\|^2=0.
\end{equation*}
Hence, $x^k=P_{H_k}(x^k)$, implying that $x^k\in H_k$ and by Proposition \ref{H-separa-x}, $x^k\in S^*$.
\end{proof}

\begin{proposition}\label{prop2}
\begin{enumerate}
\item The sequence $\{x^k\}_{k\in \NN}$ is Fej\'er convergente to $S^*\cap X$.
\item The sequence $\{x^k\}_{k\in \NN}$ is bounded.
\item $\lim_{k\to \infty}\|P_{H_k}(x^k)-x^k\|^2=0$.
\item $\lim_{k\to \infty}\|x^{x+1}-x^k\|^2=0$.
\end{enumerate}
\end{proposition}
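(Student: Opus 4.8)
The plan is to prove the four items in order, using Fej\'er convergence as the backbone and a single strengthened projection inequality to extract the quantitative limits. Throughout I will exploit that every iterate lies in $X$: indeed $x^0\in X$ by Step~0, and $x^{k+1}=P_X(P_{H_k}(x^k))\in\ran P_X\subseteq X$.

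First I would fix $x^*\in S^*\cap X$ and show $x^*\in H_k$ for every $k$. Since the inner loop sets $\bar{u}_i^k=u^k_{j_i(k)}\in B_i(\bar{x}_i^k)$, the pair $(\bar{x}_i^k,\bar{u}_i^k)$ lies in $Gr(B_i)$, so Lemma~\ref{propseq} gives $S_i^*\subseteq H_i(\bar{x}_i^k,\bar{u}_i^k)$ for each $i\in\mathbb{I}\setminus\mathbb{I}_k^*$, and hence $S^*\subseteq H_k$. Thus $x^*\in X\cap H_k$. Because $P_{H_k}$ fixes $x^*$ and $P_X$ fixes $x^*$ (as $x^*\in X$), non-expansiveness of both projections yields
\[
\|x^{k+1}-x^*\|\le\|P_{H_k}(x^k)-x^*\|\le\|x^k-x^*\|,
\]
which is exactly Fej\'er convergence to $S^*\cap X$, proving item (i); item (ii) then follows at once from Proposition~\ref{punto}(i).

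For item (iii) I would sharpen the first inequality above. Applying Proposition~\ref{proj}(i) with $X=H_k$, $x=x^k$ and $y=x^*$, and using $P_{H_k}(x^*)=x^*$ so that the subtracted term collapses to $\|P_{H_k}(x^k)-x^k\|^2$ (equivalently, expanding Proposition~\ref{proj}(ii) at $z=x^*$), gives
\[
\|P_{H_k}(x^k)-x^*\|^2\le\|x^k-x^*\|^2-\|x^k-P_{H_k}(x^k)\|^2 .
\]
Chaining with $\|x^{k+1}-x^*\|^2\le\|P_{H_k}(x^k)-x^*\|^2$ produces the telescoping bound $\|x^k-P_{H_k}(x^k)\|^2\le\|x^k-x^*\|^2-\|x^{k+1}-x^*\|^2$. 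Since $\{\|x^k-x^*\|\}$ converges by Proposition~\ref{punto}(ii), the right-hand side is summable, forcing $\|P_{H_k}(x^k)-x^k\|^2\to0$.

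Finally, item (iv) follows from (iii) with no extra work: since $x^k\in X$ we have $P_X(x^k)=x^k$, so non-expansiveness of $P_X$ gives
\[
\|x^{k+1}-x^k\|=\|P_X(P_{H_k}(x^k))-P_X(x^k)\|\le\|P_{H_k}(x^k)-x^k\|\to0 .
\]
The only genuinely substantive step is the inclusion $S^*\subseteq H_k$ underpinning item (i), which hinges on the correct membership $\bar{u}_i^k\in B_i(\bar{x}_i^k)$ read off from the inner loop; everything else is a mechanical application of non-expansiveness together with the telescoping argument.
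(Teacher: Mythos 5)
Your proposal is correct and follows essentially the same route as the paper: non-expansiveness of $P_X$ combined with the strengthened projection inequality of Proposition \ref{proj}(i) applied to $P_{H_k}$ (using $x^*\in H_k$ from Lemma \ref{propseq}) to get the Fej\'er inequality with the extra term $-\|P_{H_k}(x^k)-x^k\|^2$, then the telescoping argument via Proposition \ref{punto}(ii) for item (iii), and non-expansiveness with $P_X(x^k)=x^k$ for item (iv). Your explicit verification that $\bar{u}_i^k\in B_i(\bar{x}_i^k)$, so that Lemma \ref{propseq} actually applies to give $S^*\subseteq H_k$, is a detail the paper leaves implicit but is exactly the right justification.
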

\begin{proof}
(i) Take $x^*\in S^*\cap X$.  Using \eqref{P112}, Proposition \ref{proj}(i) and Lemma \ref{propseq}, we have
\begin{eqnarray}\label{fejer-des}\nonumber\|x^{k+1}-x^{*}\|^2&=&\|P_{X}(P_{H_k}(x^k))-P_{X}(P_{H_k}(x^{*}))\|^2\le \|P_{H_k}(x^k)-P_{H_k}(x^{*})\|^2\\&\leq& \|x^k-x^*\|^2-\|P_{H_k}(x^k)-x^k\|^2.\end{eqnarray} So, $\|x^{k+1}-x^{*}\|\le \|x^k-x^*\|$.
(ii) Follows immediately from item (i).
(iii)Take $x^* \in S^*\cap X$. Using \eqref{fejer-des} yields
\begin{equation}\label{ineq}
\|P_{H_k}(x^k)-x^k\|^2\le \|x^k-x^*\|^2-\|x^{k+1}-x^{*}\|^2.
\end{equation}
Now using Proposition \ref{punto} and item (i) we have that the right side of equation \eqref{ineq} go to zero. Obtaining the result.
(iv) Since the sequence $\{x^k\}_{k\in \NN}$ belong to $X$, we have,
$$\|x^{k+1}-x^k\|^2=\|P_{X}(P_{H_k}(x^k))-P_{X}(x^k)\|^2\le \|P_{H_k}(x^k)-x^k\|^2.$$
Taking limits in the above equation and using the previous item we have the result.
\end{proof}
\begin{proposition}\label{cadai}
For all $i\in \mathbb{I}$ we have,
$$\lim_{k\to \infty}\la A_i(\bar{x}_i^k)+\bar{u}_i^k,x^k-\bar{x}_i^k \ra=0.$$
\end{proposition}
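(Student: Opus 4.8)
The plan is to squeeze the nonnegative quantity $q_i^k := \la A_i(\bar{x}_i^k)+\bar{u}_i^k, x^k-\bar{x}_i^k\ra$ between $0$ and a sequence that vanishes. Corollary \ref{coro} already supplies the lower bound $q_i^k \ge 0$, so the whole task reduces to producing an upper estimate of the form $q_i^k \le M\,\|P_{H_k}(x^k)-x^k\|$ with $M$ independent of $k$; the right-hand side then tends to $0$ by Proposition \ref{prop2}(iii), forcing $q_i^k \to 0$.

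First I would fix $i$ and a $k$ with $i\in\mathbb{I}\setminus\mathbb{I}_k^*$ and exploit the geometry of the halfspace. By \eqref{H(x)}, $H_i(\bar{x}_i^k,\bar{u}_i^k)$ is the halfspace with normal $a_i^k:=A_i(\bar{x}_i^k)+\bar{u}_i^k$ through $\bar{x}_i^k$, and Corollary \ref{coro} gives $q_i^k=\la a_i^k,x^k-\bar{x}_i^k\ra>0$, so $x^k\notin H_i(\bar{x}_i^k,\bar{u}_i^k)$ and in particular $a_i^k\ne 0$. The Euclidean distance from $x^k$ to this halfspace is then $q_i^k/\|a_i^k\|$. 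Since $H_k\subseteq H_i(\bar{x}_i^k,\bar{u}_i^k)$ by \eqref{hk}, the projection onto the smaller set $H_k$ cannot be closer, so
\[
\frac{q_i^k}{\|a_i^k\|}=\operatorname{dist}(x^k,H_i(\bar{x}_i^k,\bar{u}_i^k))\le\operatorname{dist}(x^k,H_k)=\|P_{H_k}(x^k)-x^k\|,
\]
which rearranges to $q_i^k\le\|a_i^k\|\,\|P_{H_k}(x^k)-x^k\|$.

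It remains to bound $\|a_i^k\|$ uniformly. The term $\bar{u}_i^k$ lies in $B[0,R]$ by its construction in Step~3, hence $\|\bar{u}_i^k\|\le R$. For the other term, $\{x^k\}$ is bounded by Proposition \ref{prop2}(ii); since $A_i$ is maximal monotone with full domain, Lemma \ref{bound}(ii) makes $\{A_i(x^k)\}$ bounded, and as $\{\beta_k\}$ is bounded so is $\{(I-\beta_k A_i)(x^k)\}$. Using that the resolvent $(I+\beta_k B_i)^{-1}$ is nonexpansive (a standard consequence of the maximal monotonicity in Proposition \ref{inversa}), the sequence $\{J_i(x^k,\beta_k)\}$ is bounded, so by \eqref{xbar} the convex combination $\bar{x}_i^k$ stays in a bounded set, and Lemma \ref{bound}(ii) applied once more gives $\{A_i(\bar{x}_i^k)\}$ bounded. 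Thus $\|a_i^k\|\le M$ for all $k$, and passing to the limit in $0\le q_i^k\le M\,\|P_{H_k}(x^k)-x^k\|$ yields the claim for indices outside $\mathbb{I}_k^*$. When $i\in\mathbb{I}_k^*$ we have $x^k=J_i(x^k,\beta_k)$, so \eqref{xbar} forces $\bar{x}_i^k=x^k$ and $q_i^k=0$; hence the limit holds for every $i\in\mathbb{I}$.

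I expect the main obstacle to be exactly the uniform boundedness of $\|a_i^k\|$: the separation/distance inequality is a one-line geometric observation, but controlling $A_i(\bar{x}_i^k)$ requires first establishing that $J_i(x^k,\beta_k)$, and therefore $\bar{x}_i^k$, remains bounded, which is where the boundedness of $\{x^k\}$, the local boundedness of the maximal monotone $A_i$, and the nonexpansiveness of the resolvent must be combined carefully.
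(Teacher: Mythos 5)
Your proof is correct, and it reaches the conclusion by a genuinely more direct route than the paper. Both arguments hinge on the same two ingredients: the explicit distance $q_i^k/\|A_i(\bar{x}_i^k)+\bar{u}_i^k\|$ from $x^k$ to the halfspace $H_i(\bar{x}_i^k,\bar{u}_i^k)$, and the uniform boundedness of $\|A_i(\bar{x}_i^k)+\bar{u}_i^k\|$ (which the paper gets from continuity of $J_i$ on the bounded sequence $\{x^k\}$, and you get from local boundedness of $A_i$ plus nonexpansiveness of the resolvent --- both fine in $\RR^n$). Where you diverge is in showing that this distance vanishes: the paper re-runs a Fej\'er-type estimate against a solution $x^*$, writing $\|P_{H_i(\bar{x}_i^k,\bar{u}_i^k)}(x^k)-x^k\|^2\le \|x^k-x^*\|^2-\|x^{k+1}-x^*\|^2+\|P_{H_k}(x^k)-x^k\|^2$ via the decomposition in \eqref{todoi} and then invokes convergence of $\{\|x^k-x^*\|\}$; you instead observe that $H_k\subseteq H_i(\bar{x}_i^k,\bar{u}_i^k)$ makes $\operatorname{dist}(x^k,H_i(\bar{x}_i^k,\bar{u}_i^k))\le\operatorname{dist}(x^k,H_k)=\|P_{H_k}(x^k)-x^k\|$, and quote Proposition \ref{prop2}(iii) directly. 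Your inclusion-monotonicity step is a one-liner that needs no reference point $x^*$ and, as a bonus, sidesteps the cross-term in the paper's expansion of $\|P_{H_k}(x^k)-P_{H_i(\bar{x}_i^k,\bar{u}_i^k)}(x^k)+P_{H_i(\bar{x}_i^k,\bar{u}_i^k)}(x^k)-x^*\|^2$, whose sign is not justified there; you also handle the indices $i\in\mathbb{I}_k^*$ explicitly, which the paper leaves implicit. The only cosmetic caveat is that Corollary \ref{coro} as stated already gives $q_i^k\ge 0$ without any case split, so your final sentence is a clarification of the convention $\bar{x}_i^k=x^k$ for $i\in\mathbb{I}_k^*$ rather than a needed extra case.
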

\begin{proof}
For all $i\in \mathbb{I}$. Using Proposition \ref{proj}(i) and the fact that $H_k \subset H(\bar{x}_i^k,\bar{u}_i^k)$ by \eqref{hk}, we have that,
\begin{align}\label{todoi}
\|x^{k+1}-x^*\|^2 =&\|P_{X}(P_{H_k}(x^k))-P_{X}(x^*)\|^2\le \|P_{H_k}(x^k)-x^*\|^2 \nonumber \\
=&\|P_{H_k}(x^k)-P_{H(\bar{x}_i^k,\bar{u}_i^k)}(x^k)+P_{H(\bar{x}_i^k,\bar{u}_i^k)}(x^k)-x^*\|^2\nonumber \\
\le& \|P_{H_k}(x^k)-x^k\|^2+\|P_{H(\bar{x}_i^k,\bar{u}_i^k)}(x^k)-x^*\|^2.
\end{align}
Now using Proposition \ref{proj}(i) and reordering \eqref{todoi}, we get,
$$\|P_{H(\bar{x}_i^k,\bar{u}_i^k)}(x^k)-x^k\|^2\leq \|x^k-x^*\|^2-\|x^{k+1}-x^*\|^2+\|P_{H_k}(x^k)-x^k\|^2.$$
Using the fact that,
$$P_{H(\bar{x}_i^k,\bar{u}_i^k)}(x^k)=x^k-\frac{\la A_i(\bar{x}_i^k)+\bar{u}_i^k,x^k-\bar{x}_i^k  \ra}{\|A_i(\bar{x}_i^k)+\bar{u}_i^k\|^2}(A_i(\bar{x}_i^k)+\bar{u}_i^k),$$
and the previous equation, we have,
\begin{equation}\label{pasar al lim}
\frac{\big(\la A_i(\bar{x}_i^k)+\bar{u}_i^k,x^k-\bar{x}_i^k  \ra\big)^2}{\|A_i(\bar{x}_i^k)+\bar{u}_i^k\|^2}\leq \|x^k-x^*\|^2-\|x^{k+1}-x^*\|^2+\|P_{H_k}(x^k)-x^k\|^2.
\end{equation}
By Proposition \ref{inversa} and the continuity of $A_i$ we have that $J_i$ is continuo, since $\{x^k\}_{k\in \NN}$ and $\{\beta_k\}_{k\in \NN}$ are bounded then $\{J_i(x^k,\beta_k)\}_{k\in \NN}$ and $\{\bar{x}_i^k\}_{k\in \NN}$ are bounded, implying the boundedness of $\{\|A_i(\bar{x}_i^k)+\bar{u}_i^k\|\}_{k\in \NN}$ for all $i\in \mathbb{I}$.

\noindent Using Proposition \ref{punto}(ii) and (iii), the right side of \eqref{pasar al lim} goes to 0, when $k$ goes to $\infty$, establishing the result.
\end{proof}

\noindent Next we establish our main convergence result on {\bf Variant \ref{A1}}.
\begin{theorem}\label{teo1}
The sequence $\{x^k\}_{k\in \NN}$ converges to some element belonging to $S^*\cap X $.
 \end{theorem}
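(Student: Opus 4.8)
The plan is to combine the Fej\'er machinery already established for {\bf Variant \ref{A1}} with a subsequential limit argument. Since $\{x^k\}_{k\in\NN}$ is bounded by Proposition \ref{prop2}(ii), it has at least one cluster point $\bar x$, say $x^{k_j}\to\bar x$. Because $x^k\in X$ for every $k$ and $X$ is closed, $\bar x\in X$. By Proposition \ref{prop2}(i) the sequence is Fej\'er convergent to $S^*\cap X$, so by Proposition \ref{punto}(iii) it suffices to prove that this single cluster point $\bar x$ lies in $S^*\cap X$; the whole sequence then converges to $\bar x$. Thus the entire task reduces to showing that $\bar x\in S_i^*$ for every $i\in\mathbb{I}$, equivalently, by Proposition \ref{parada}, that $\bar x=J_i(\bar x,\bar\beta)$ for some admissible step $\bar\beta$.

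First I would extract the consequence of the Armijo test. Corollary \ref{coro} provides the lower bound $\la A_i(\bar x_i^k)+\bar u_i^k,x^k-\bar x_i^k\ra\ge\frac{\alpha_{k,i}\delta}{\hat\beta}\|x^k-J_i(x^k,\beta_k)\|^2$, while Proposition \ref{cadai} forces the left-hand side to $0$. Hence, for each fixed $i$,
\[
\alpha_{k,i}\,\|x^k-J_i(x^k,\beta_k)\|^2\longrightarrow 0 .
\]
If the step sizes stayed bounded away from $0$ this would at once give $\|x^k-J_i(x^k,\beta_k)\|\to0$. The hard part is precisely that the Armijo stepsize $\alpha_{k,i}=\theta^{\,j_i(k)}$ may degenerate to $0$, and controlling this degeneration is the main obstacle of the proof.

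To handle it I would work along the subsequence $k_j$ and split, for each $i$, into two cases (passing to a further subsequence if needed). If $\liminf_j\alpha_{k_j,i}>0$, the displayed limit yields $\|x^{k_j}-J_i(x^{k_j},\beta_{k_j})\|\to0$ immediately. If instead $\alpha_{k_j,i}\to0$, then $j_i(k_j)\to\infty$, so the Armijo inequality \eqref{jk} fails at the previous index $j_i(k_j)-1$. Writing $z^{k}:=\theta^{\,j_i(k)-1}J_i(x^k,\beta_k)+(1-\theta^{\,j_i(k)-1})x^k$ and $w^k\in B_i(z^k)\cap B[0,R]$ for the corresponding choice, this failure reads
\[
\Big\la A_i(z^{k})+w^{k},\,x^k-J_i(x^k,\beta_k)\Big\ra<\frac{\delta}{\beta_k}\|x^k-J_i(x^k,\beta_k)\|^2 .
\]
Now I would use the resolvent identity from \eqref{jota}, namely $\beta_kA_i(x^k)=x^k-J_i(x^k,\beta_k)-\beta_k v^k$ with $v^k\in B_i(J_i(x^k,\beta_k))$, together with the monotonicity of $B_i$ applied to the pairs $(z^k,w^k)$ and $(J_i(x^k,\beta_k),v^k)$ and the identity $z^k-J_i(x^k,\beta_k)=(1-\theta^{\,j_i(k)-1})(x^k-J_i(x^k,\beta_k))$. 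This lower-bounds the left-hand side and, after cancellation, leaves
\[
\frac{1-\delta}{\hat\beta}\,\|x^k-J_i(x^k,\beta_k)\|\le\big\|A_i(x^k)-A_i(z^{k})\big\| .
\]
Since $\theta^{\,j_i(k_j)-1}\to0$ and $\{J_i(x^{k_j},\beta_{k_j})-x^{k_j}\}$ is bounded, $z^{k_j}-x^{k_j}\to0$, so $z^{k_j}\to\bar x$ as well; continuity of $A_i$ then drives the right-hand side to $0$, giving once more $\|x^{k_j}-J_i(x^{k_j},\beta_{k_j})\|\to0$.

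In either case I obtain $\|x^{k_j}-J_i(x^{k_j},\beta_{k_j})\|\to0$ for each $i\in\mathbb{I}$. Taking a common subsequence over the finitely many indices and a further subsequence along which $\beta_{k_j}\to\bar\beta\in[\check\beta,\hat\beta]$, the continuity of $J_i$ (Proposition \ref{inversa} combined with continuity of $A_i$) yields $\bar x=J_i(\bar x,\bar\beta)$. By Proposition \ref{parada} this means $0\in(A_i+B_i)(\bar x)$, i.e. $\bar x\in S_i^*$, for every $i$, hence $\bar x\in S^*$. Combined with $\bar x\in X$ and the Fej\'er convergence to $S^*\cap X$, Proposition \ref{punto}(iii) then gives that the whole sequence $\{x^k\}_{k\in\NN}$ converges to $\bar x\in S^*\cap X$, which is the assertion of the theorem.
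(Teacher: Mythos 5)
Your proof is correct, and its overall skeleton coincides with the paper's: Fej\'er convergence plus a cluster point in $S^*\cap X$ (Proposition \ref{punto}(iii)), the bound from Corollary \ref{coro} combined with Proposition \ref{cadai} to get $\alpha_{k,i}\|x^k-J_i(x^k,\beta_k)\|^2\to 0$, and the dichotomy on whether the Armijo stepsizes degenerate. Where you genuinely diverge is in closing case (b). The paper keeps the failed Armijo inequality at the index $j_i(k)-1$ in its raw form, extracts a convergent subsequence of the chosen elements $u^{j_k}_{j(j_k)-1,i}$, invokes maximality of $B_i$ to identify the limit $\tilde u_i\in B_i(\tilde x)$, passes to the limit to obtain \eqref{tiu}, and only then applies the resolvent identity and the monotonicity of $B_i$ at the limit point to conclude $(1-\delta)\|\tilde x-J_i(\tilde x,\tilde\beta)\|^2\le 0$. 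You instead perform the resolvent substitution and the monotonicity estimate at each finite $k$ (using $z^k-J_i(x^k,\beta_k)=(1-\theta^{j_i(k)-1})(x^k-J_i(x^k,\beta_k))$, which has a nonnegative coefficient once $j_i(k)\ge 2$, as happens eventually when $\alpha_{k_j,i}\to0$), so that the auxiliary elements $w^k$ cancel and you are left with $\tfrac{1-\delta}{\hat\beta}\|x^k-J_i(x^k,\beta_k)\|\le\|A_i(x^k)-A_i(z^k)\|$; continuity of $A_i$ alone then finishes. This buys two things: you never need the $w^k$ to converge nor the graph-closedness/maximality of $B_i$ in this step, and you arrive directly at a quantitative inequality that yields $\|x^{k_j}-J_i(x^{k_j},\beta_{k_j})\|\to0$ (so both cases are resolved by the single continuity argument for $J_i$ at the end). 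The paper's route is more in the spirit of characterizing the limit point directly; yours is slightly more economical in hypotheses and also sidesteps the sign slip in the paper's display \eqref{conse}, where the failed test should read with $<$ rather than $>$.
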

\begin{proof}
 We claim that there exists a cluster point of $\{x^k\}_{k\in \NN}$ belonging to $S^*$. The existence of the cluster points follows from Proposition \ref{prop2}(ii). Let $\{x^{j_k}\}_{k\in \NN}$ be a convergent subsequence of $\{x^k\}_{k\in \NN}$ such that, for all $i\in \mathbb{I}$ the sequences  $\{\bar{x}_i^{j_k}\}_{k\in \NN}, \{\bar{u}_i^{j_k}\}_{k\in \NN}, \{\alpha_{j_k,i}\}_{k\in \NN}$ and $\{\beta_{j_k}\}_{k\in \NN}$ are convergents, and $\lim _{k\to \infty}x^{j_k}= \tilde{x}$.\\
Using Proposition \ref{prop2}(iii) and taking limits in \eqref{desig-muy-usada} over the subsequence $\{j_k\}_{k\in \NN}$, we have for all $i\in \mathbb{I}$,
\begin{equation}\label{limite}
0=\lim_{k\to \infty}\la A_i(\bar{x}_i^{j_k})+\bar{u}_i^{j_k},x^{j_k}-\bar{x}_i^{j_k} \ra \ge \lim_{k\to \infty}  \frac{\alpha_{j_k,i}\delta}{\hat{\beta}}\|x^{j_k}-J_i(x^{j_k},\beta_{j_k})\|^2\geq0.
\end{equation}
Therefore,
\begin{equation*}
\lim_{k\to \infty} \alpha_{j_k,i}\|x^{j_k}-J_i(x^{j_k},\beta_{j_k})\|=0.
\end{equation*}
Now consider the two possible cases.

(a) First, assume that $\lim_{k\to \infty}\alpha_{j_k,i}\ne 0$, i.e., $\alpha_{j_k,i}\ge \bar{\alpha}$ for all $k$ and some  $\bar{\alpha}>0$. In view of (\ref{limite}),

    \begin{equation}\label{limcero}
    \lim_{k\to \infty}\|x^{j_k}-J_i(x^{j_k},\beta_{j_k})\|=0.
    \end{equation}
    Since $J_i$ is continuous, by the continuity of $A_i$ and $(I+\beta_k B_i)^{-1}$ and by Proposition \ref{inversa}, \eqref{limcero} becomes
    \begin{equation*}
    \tilde{x}=J_i(\tilde{x},\tilde{\beta}),
    \end{equation*}
    which implies that $\tilde{x}\in S_i^*$ for all $i\in \mathbb{I}$. Then $\tilde{x}\in S^*$ establishing the claim.

 (b) On the other hand, if $\lim_{k\to \infty}\alpha_{j_k,i}=0$ then for $\theta \in (0,1)$ as in the conceptual algorithm, we have
    $$\lim_{k\to\infty}\frac{\alpha_{j_k,i}}{\theta}=0.$$
    Define
    $$y^{j_k}_i:=\frac{\alpha_{j_k,i}}{\theta}J_i(x^{j_k},\beta_{j_k})+\Big(1-\frac{\alpha_{j_k,i}}{\theta}\Big)x^{j_k}.$$
    Then,
    \begin{equation}\label{ykgox}
    \lim_{k\to\infty}y_i^{j_k}=\tilde{x}.
    \end{equation}
    Using the definition of the $j_i(k)$ and \eqref{alphak}, we have that $y_i^{j_k}$ does not satisfy \eqref{jk} implying
    \begin{equation*}
    \Big \la A_i (y^{j_k}_i)+u^{j_k}_{j_i(k)-1}-\frac{\delta}{\beta_k}(x^k -J_i\big(x^k,\beta_k)\big), x^k-J_i(x^k,\beta_k)\Big \ra > 0,
    \end{equation*}
    equivalent to
    \begin{equation}\label{conse}
\Big \la A_i (y^{j_k}_i)+u^{j_k}_{j(j_k)-1,i}, x^k-J_i(x^k,\beta_k)\Big \ra > \frac{\delta}{\beta_k}\|x^k-J_i\big(x^k,\beta_k\big)\|^2,
    \end{equation}
    for $u^{j_k}_{j(j_k)-1,i}\in B_i(y^{j_k}_i)$ and all $k\in \NN$ and $i\in \mathbb{I}$.\\
    Redefining the subsequence $\{j_k\}_{k\in \NN}$, if necessary, we may assume that  $\{u^{j_k}_{j(j_k)-1,i}\}_{k\in \NN}$ converges to $\tilde{u}_i$. By the maximality of $B_i$, $\tilde{u}_i$ belongs to $B_i(\tilde{x})$. Using the continuity of $J_i$, $\{J(x^{j_k},\beta_{j_k})\}_{k\in \NN}$ converges to $J_i(\tilde{x},\tilde{\beta})$. Using \eqref{ykgox} and taking limit in (\ref{conse}) over the subsequence $\{j_k\}_{k\in \NN}$, we have
    \begin{equation}\label{tiu}
    \Big\la A_i(\tilde{x}) + \tilde{u}_i, \tilde{x}- J_i(\tilde{x},\tilde{\beta})\Big\ra \le \frac{\delta}{\tilde{\beta}}\|\tilde{x}-J_i(\tilde{x},\tilde{\beta})\|^2.
    \end{equation}
    Using (\ref{jota}) and multiplying by $\tilde{\beta}$ on both sides of (\ref{tiu}), we get
    \begin{equation*}
    \la \tilde{x}-J_i(\tilde{x},\tilde{\beta})-\tilde{\beta}\tilde{v}_i+\tilde{\beta}\tilde{u}_i, \tilde{x}-J_i(\tilde{x},\tilde{\beta})\ra \le \delta\| \tilde{x}-J_i(\tilde{x},\tilde{\beta})\|^2,
    \end{equation*}
    where $\tilde{v}_i\in B_i(J_i(\tilde{x},\tilde\beta))$. Applying the monotonicity of $B_i$, we obtain
    $$\| \tilde{x}-J_i(\tilde{x},\tilde{\beta})\|^2 \le \delta \| \tilde{x}-J_i(\tilde{x},\tilde{\beta})\|^2,$$
    implying that $\| \tilde{x}-J_i(\tilde{x},\tilde{\beta})\|\le 0$. Thus, $\tilde{x}=J_i(\tilde{x},\tilde{\beta})$ and hence, $\tilde{x}\in S_i^*$ for all $i\in \mathbb{I}$, thus $\tilde{x} \in S^*$.
\end{proof}
\subsection{Convergence analysis of Variant \ref{A2}}\label{sec-5.2}
In this section, all results are for {\bf Variant
\ref{A2}}, which is summarized below.

\vspace*{-0.2in}
\begin{center}\fbox{\begin{minipage}[b]{\textwidth}
\begin{variant}{A.2}
\label{A2}$x^{k+1}=\mcF_{\rm\ref{A2}}(x^k)=P_{X\cap H_k}(x^k)$
\end{variant}\end{minipage}}\end{center}
\begin{proposition}\label{stop2}
If {\bf Variant \ref{A2}} stops, then $x^k\in S^*$.
\end{proposition}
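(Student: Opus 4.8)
The plan is to take advantage of the fact that Variant \ref{A2} performs a \emph{single} projection onto the intersection $X\cap H_k$, so that the stopping analysis becomes considerably more direct than in Proposition \ref{stop1}. First I would observe that if the scheme halts through Stop Criteria $2$, then by \eqref{P122} we have $x^{k+1}=\mcF_{\rm\ref{A2}}(x^k)=P_{X\cap H_k}(x^k)=x^k$. The key elementary fact I would invoke is that the orthogonal projection of a point onto a nonempty, closed and convex set returns that point if and only if the point already lies in the set.

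Before applying this characterization, I must check that $X\cap H_k$ is a legitimate target for the projection, i.e. nonempty, closed and convex. Convexity and closedness are immediate, since each $H_i(\bar{x}_i^k,\bar{u}_i^k)$ is a closed half-space by \eqref{H(x)}, $H_k$ is their finite intersection by \eqref{hk}, and $X$ is closed and convex by hypothesis. Nonemptiness is the one substantive point: it follows from the standing assumption $X\cap S^*\neq\emptyset$ together with Lemma \ref{propseq}, which gives $S^*\subset H_i(\bar{x}_i^k,\bar{u}_i^k)$ for every $i\in\mathbb{I}$ and hence $S^*\subset H_k$ by \eqref{hk}; therefore $\emptyset\neq X\cap S^*\subseteq X\cap H_k$.

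With the projection well-defined, the characterization yields $x^k\in X\cap H_k$, and in particular $x^k\in H_k$. Applying Proposition \ref{H-separa-x} then gives $x^k\in S^*$, which is precisely the claim. For completeness I would also dispose of the remaining exit route: if the algorithm instead stops at Stopping Test $1$, so that $\mathbb{I}_k^*=\mathbb{I}$ and $x^k=J_i(x^k,\beta_k)$ for all $i$, then Proposition \ref{parada} gives $0\in(A_i+B_i)(x^k)$ for every $i$, i.e. $x^k\in S^*$ at once.

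I do not anticipate any genuine obstacle here; the only point requiring a moment's care — and the reason this argument is shorter than that of Proposition \ref{stop1} — is that projecting onto the single set $X\cap H_k$ collapses the two-inequality balancing argument needed for the composed map $P_X\circ P_{H_k}$ in Variant \ref{A1} into a one-line membership statement.
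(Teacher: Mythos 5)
Your argument is correct and follows essentially the same route as the paper's own (one-line) proof: the fixed-point property of the projection gives $x^k\in X\cap H_k$, and Proposition \ref{H-separa-x} then yields $x^k\in S^*$. The additional care you take over the nonemptiness of $X\cap H_k$ and the Stopping Test 1 exit is sound but not a different method.
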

\begin{proof}
If $x^{k+1}=P_{X\cap H_k}(x^k)=x^k$ then $x^k\in X\cap H_k$ and by Proposition \ref{H-separa-x}, $x^k\in S^*\cap X$.
\end{proof}
\noindent From now on assume that {\bf Variant \ref{A2}} does not stop.
\begin{proposition}\label{fe}
The sequence $\{x^k\}_{k\in \NN}$ is F\'ejer convergent to $S^*\cap X$. Moreover, it is bounded and
\begin{equation*}
\lim_{k \to \infty} \|x^{k+1}-x^k\|=0.
\end{equation*}
\end{proposition}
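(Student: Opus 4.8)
The plan is to mimic the argument for \textbf{Variant \ref{A1}} given in Proposition \ref{prop2}, but the single projection $P_{X\cap H_k}$ makes everything more direct. The first and essential step is to observe that $S^*\cap X$ is contained in the set onto which we project at each iteration. Indeed, fix any $x^*\in S^*\cap X$. For each $i\in\mathbb{I}\setminus\mathbb{I}_k^*$ the inner loop together with \eqref{ubar} and \eqref{xbar} produces $\bar u_i^k\in B_i(\bar x_i^k)$, so $(\bar x_i^k,\bar u_i^k)\in Gr(B_i)$ and Lemma \ref{propseq} yields $x^*\in H_i(\bar x_i^k,\bar u_i^k)$. Intersecting over $i$ and using \eqref{hk} gives $x^*\in H_k$, hence $x^*\in X\cap H_k$. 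In particular this shows $X\cap H_k\neq\emp$, so that the projection in \eqref{P122} is well defined ($X\cap H_k$ being closed and convex), and moreover $P_{X\cap H_k}(x^*)=x^*$.

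Next I would apply Proposition \ref{proj}(i) to the projection $P_{X\cap H_k}$ evaluated at the points $x^k$ and $x^*$. Since $P_{X\cap H_k}(x^k)=x^{k+1}$ and $P_{X\cap H_k}(x^*)=x^*$, the correction term $\|(P_{X\cap H_k}(x^k)-x^k)-(P_{X\cap H_k}(x^*)-x^*)\|^2$ equals $\|x^{k+1}-x^k\|^2$, and the estimate collapses to
\begin{equation*}
\|x^{k+1}-x^*\|^2\le \|x^k-x^*\|^2-\|x^{k+1}-x^k\|^2.
\end{equation*}
This single inequality carries all three conclusions of the proposition.

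From here the conclusions are immediate. Dropping the nonnegative term $\|x^{k+1}-x^k\|^2$ gives $\|x^{k+1}-x^*\|\le\|x^k-x^*\|$ for every $x^*\in S^*\cap X$, which is exactly Fej\'er convergence to $S^*\cap X$; boundedness then follows from Proposition \ref{punto}(i). Finally, rearranging the displayed inequality as $\|x^{k+1}-x^k\|^2\le\|x^k-x^*\|^2-\|x^{k+1}-x^*\|^2$ and invoking Proposition \ref{punto}(ii), which guarantees that $\{\|x^k-x^*\|\}_{k\in\NN}$ converges, forces the right-hand side to $0$, so that $\lim_{k\to\infty}\|x^{k+1}-x^k\|=0$.

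There is no genuinely hard step here: once the key inclusion is established, the statement is a direct consequence of the firmly nonexpansive character of the orthogonal projection. The only point requiring care is precisely that initial inclusion $S^*\cap X\subseteq X\cap H_k$ — in particular checking that $\bar u_i^k\in B_i(\bar x_i^k)$ so that Lemma \ref{propseq} applies, and that $X\cap H_k$ is nonempty, closed and convex so that $P_{X\cap H_k}$ is well defined. After that, everything reduces to the standard Fej\'er-type bookkeeping already carried out for \textbf{Variant \ref{A1}}.
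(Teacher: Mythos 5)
Your proposal is correct and follows essentially the same route as the paper: both establish $S^*\cap X\subseteq X\cap H_k$ via Lemma \ref{propseq} and then derive the identical key inequality $\|x^{k+1}-x^*\|^2\le \|x^k-x^*\|^2-\|x^{k+1}-x^k\|^2$, from which Fej\'er convergence, boundedness and the vanishing of $\|x^{k+1}-x^k\|$ follow by Proposition \ref{punto}. The only (immaterial) difference is that you obtain this inequality from the firm nonexpansiveness in Proposition \ref{proj}(i) applied at $x^k$ and $x^*$, while the paper uses the variational characterization in Proposition \ref{proj}(ii) together with an algebraic identity.
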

\begin{proof}
Take $x^*\in S^*\cap X$. By Lemma \ref{propseq}, $x^*\in H_k\cap X$, for all $k$. Then using Proposition \ref{proj}(ii) and \eqref{P122}
\begin{equation*}
\|x^{k+1}-x^*\|^2-\|x^k-x^*\|^2+\|x^{k+1}-x^k\|^2=2\la x^*-x^{k+1},x^k-x^{k+1}\ra\leq 0,
\end{equation*}
we obtain
\begin{equation}\label{fejerc}
\|x^{k+1}-x^*\|^2\le \|x^k-x^*\|^2-\|x^{k+1}-x^k\|^2.
\end{equation}
The above inequality implies that $\{x^k\}_{k\in \NN}$ is F\'ejer convergent to $S^*\cap X$. Hence by Proposition \ref{punto}(i) and (ii), $\{x^k\}_{k\in \NN}$ is bounded and thus $\{\|x^k-x^*\|\}_{k\in \NN}$ is a convergent sequence. Taking limits in (\ref{fejerc}), we get
\begin{equation*}
\lim_{k \to \infty} \|x^{k+1}-x^k\|=0.
\end{equation*}
\end{proof}

The next proposition shows a relation between the projection steps in {\bf Variant \ref{A1}} and {\bf \ref{A2}}. This fact has a geometry interpretation, since the projection of {\bf Variant \ref{A2}} is done over a small set, improving the convergence of {\bf Variant \ref{A1}}. Note that this can be reduce the number of iterations, avoiding possible zigzagging of {\bf Variant \ref{A1}}.
\begin{proposition}Let $\{x^{k}\}_{k\in \NN}$ the sequence generated by {\bf Variant \ref{A2}}. Then,
\begin{enumerate}
\item $x^{k+1}=P_{X\cap H_k}(P_{H_k}(x^k))$.
\item For all $i\in \mathbb{I}$ we have,  $\lim_{k\to \infty}\la A_i(\bar{x}_i^k)+\bar{u}_i^k,x^k-\bar{x}_i^k \ra=0$.
\end{enumerate}
\end{proposition}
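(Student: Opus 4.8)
The plan is to handle the two assertions separately: I would reduce (i) to a geometric fact about projecting a point of $X$ onto $X\cap H_k$, and derive (ii) from the residual estimate $\|x^{k+1}-x^k\|\to0$ already supplied by Proposition~\ref{fe}. Throughout, the structural fact I would lean on is that every iterate lies in $X$: indeed $x^0\in X$ and $x^{k+1}=P_{X\cap H_k}(x^k)\in X\cap H_k\subseteq X$, so $x^k\in X$ for all $k$. For (i), since by definition $x^{k+1}=P_{X\cap H_k}(x^k)$, the claim is equivalent to $P_{X\cap H_k}(x^k)=P_{X\cap H_k}\big(P_{H_k}(x^k)\big)$. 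I first dispose of the trivial case: if $x^k\in H_k$ then $x^k\in S^*$ by Proposition~\ref{H-separa-x}, so $x^k\in X\cap H_k$ and both sides equal $x^k$; hence assume $x^k\notin H_k$. Write $w:=P_{X\cap H_k}(x^k)$, $y:=P_{H_k}(x^k)$ and $v:=P_{X\cap H_k}(y)$; the target is $w=v$.

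The core of the argument is that both $w$ and $v$ lie on the boundary $\partial H_k$. For $w$: if $w\in\operatorname{int}H_k$, then because $x^k\in X$ and $w\in X\cap H_k\subseteq X$ the whole segment $[x^k,w]$ lies in $X$; since $x^k\notin H_k$ while $w\in\operatorname{int}H_k$ this segment meets $\partial H_k$ at an interior point $p=\lambda x^k+(1-\lambda)w$, $\lambda\in(0,1)$, which belongs to $X\cap H_k$ and satisfies $\|x^k-p\|<\|x^k-w\|$, contradicting $w=P_{X\cap H_k}(x^k)$. For $v$ the argument is similar but uses Proposition~\ref{proj}(ii): if $v\in\operatorname{int}H_k$ the segment $[x^k,v]\subseteq X$ crosses $\partial H_k$ at $x'=\lambda x^k+(1-\lambda)v\in X\cap H_k$; substituting $x'$ into $\la y-v,z-v\ra\le0$ gives $\la y-v,x^k-v\ra\le0$, while $\la x^k-y,v-y\ra\le0$ (Proposition~\ref{proj}(ii) for $y=P_{H_k}(x^k)$) gives $\la y-v,x^k-v\ra\ge\|y-v\|^2$, forcing $v=y\in\partial H_k$, a contradiction. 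Hence $w,v\in\partial H_k$.

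The step $w=v$ is where I expect the real difficulty. Applying Proposition~\ref{proj}(i) to $w=P_{X\cap H_k}(x^k)$ and $v=P_{X\cap H_k}(y)$ yields the firm-nonexpansiveness estimate $\|w-v\|^2\le\la x^k-y,\,w-v\ra$. When $H_k$ is a single halfspace $\{z:\la a,z\ra\le b\}$ the direction $x^k-y$ is parallel to $a$, and $w,y,v\in\partial H_k$ give $\la x^k-y,w-v\ra=0$, whence $\|w-v\|^2\le0$ and $w=v$; this case is clean. For $H_k=\cap_{i}H_i(\bar{x}_i^k,\bar{u}_i^k)$, however, $x^k-y$ only lies in the normal cone $N_{H_k}(y)$, not along a single normal, so $\la x^k-y,w-v\ra$ need not vanish and the naive splitting of $\la y-w,z-w\ra$ is sign-indeterminate. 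My plan is to pass to the supporting halfspace $\bar H=\{z:\la x^k-y,z-y\ra\le0\}\supseteq H_k$ at $y$, for which $y=P_{\bar H}(x^k)$, apply the halfspace case to $X\cap\bar H$, and then argue that the projection of $x^k$ onto $X\cap H_k$ coincides with that onto $X\cap\bar H$ because $w\in\partial H_k$ already activates the constraint binding at $y$. Carrying out this reduction rigorously — from the single active facet back to the full intersection — is the main obstacle; it is the only point that is not a routine application of Proposition~\ref{proj}.

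Finally, (ii) is the analogue of Proposition~\ref{cadai} and needs none of the above. For each $i$ we have $H_k\subseteq H_i(\bar{x}_i^k,\bar{u}_i^k)$ by \eqref{hk}, hence $x^{k+1}=P_{X\cap H_k}(x^k)\in H_i(\bar{x}_i^k,\bar{u}_i^k)$, so $x^{k+1}$ is at least as far from $x^k$ as the distance from $x^k$ to that halfspace:
\[
\operatorname{dist}\big(x^k,H_i(\bar{x}_i^k,\bar{u}_i^k)\big)=\frac{\la A_i(\bar{x}_i^k)+\bar{u}_i^k,\,x^k-\bar{x}_i^k\ra}{\|A_i(\bar{x}_i^k)+\bar{u}_i^k\|}\le\|x^{k+1}-x^k\|,
\]
the first equality being valid because Corollary~\ref{coro} guarantees $\la A_i(\bar{x}_i^k)+\bar{u}_i^k,x^k-\bar{x}_i^k\ra\ge0$. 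Exactly as in Proposition~\ref{cadai}, continuity of $J_i$ and boundedness of $\{x^k\}$ and of $\{\bar{u}_i^k\}\subseteq B[0,R]$ make $\{\|A_i(\bar{x}_i^k)+\bar{u}_i^k\|\}$ bounded, so the inner product is dominated by a bounded multiple of $\|x^{k+1}-x^k\|$, which tends to $0$ by Proposition~\ref{fe}. Combined with the nonnegativity from Corollary~\ref{coro}, this forces $\lim_{k\to\infty}\la A_i(\bar{x}_i^k)+\bar{u}_i^k,x^k-\bar{x}_i^k\ra=0$ for every $i\in\mathbb{I}$, as claimed.
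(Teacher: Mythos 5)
Your part (ii) is correct, and it is in fact a cleaner route than the paper's: you bypass item (i) entirely and use only that $x^{k+1}=P_{X\cap H_k}(x^k)\in H_k\subseteq H_i(\bar{x}_i^k,\bar{u}_i^k)$, so that $0\le\la A_i(\bar{x}_i^k)+\bar{u}_i^k,x^k-\bar{x}_i^k\ra\le\|A_i(\bar{x}_i^k)+\bar{u}_i^k\|\,\|x^{k+1}-x^k\|$, together with the boundedness of $\|A_i(\bar{x}_i^k)+\bar{u}_i^k\|$ and $\|x^{k+1}-x^k\|\to0$ from Proposition~\ref{fe}. (The degenerate case $A_i(\bar{x}_i^k)+\bar{u}_i^k=0$ is harmless since the inner product is then zero.) This is essentially the argument the paper itself uses later for Variant~A.3 in Lemma~\ref{l:optimalidad}; the paper instead derives (ii) from (i) via nonexpansiveness of $P_{X\cap H_k}$ and then repeats the computation of Proposition~\ref{cadai}. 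Either way works; yours is shorter and does not make (ii) hostage to (i).

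Part (i), however, is not proved. You reduce the claim to $w:=P_{X\cap H_k}(x^k)=P_{X\cap H_k}(P_{H_k}(x^k))=:v$, establish that both points lie on $\partial H_k$, and then close the argument only when $H_k$ is a single halfspace; for the actual $H_k=\cap_{i}H_i(\bar{x}_i^k,\bar{u}_i^k)$ you offer a plan (replace $H_k$ by the supporting halfspace $\bar H$ at $y=P_{H_k}(x^k)$ and argue that $w$ ``activates the constraint binding at $y$'') and explicitly flag that you cannot carry it out. That flagged step is a genuine gap, and the proposed repair is not obviously sound: $w$ and $y$ may lie on different facets of $\partial H_k$, so membership of $w$ in $\partial H_k$ does not mean the constraint active at $y$ is active at $w$, and $P_{X\cap H_k}$ need not agree with $P_{X\cap\bar H}$ since $X\cap H_k\subsetneq X\cap\bar H$ in general. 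The firm-nonexpansiveness inequality $\|w-v\|^2\le\la x^k-y,w-v\ra$ alone cannot finish, because $x^k-y$ only lies in the normal cone $N_{H_k}(y)$, exactly as you observe. The paper's device is different and avoids comparing the two projections at all: for an arbitrary $y\in X\cap H_k$ it takes the point $\tilde{x}=\gamma x^k+(1-\gamma)y$ where the segment $[x^k,y]$ meets $\partial H_k$ (which lies in $X\cap H_k$ by convexity of $X$), shows $\|y-P_{H_k}(x^k)\|\ge\|\tilde{x}-P_{H_k}(x^k)\|$ using Proposition~\ref{proj}(ii), and then bounds $\|\tilde{x}-P_{H_k}(x^k)\|$ below by $\|x^{k+1}-P_{H_k}(x^k)\|$, so that $x^{k+1}$ is identified as $P_{X\cap H_k}(P_{H_k}(x^k))$ by uniqueness of the projection. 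To complete your write-up you would need either to adopt a comparison of this ``arbitrary competitor $y$'' type or to supply a genuine proof of the facet-reduction step; as it stands, item (i) is unfinished.
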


\begin{proof}
\noindent (i) Fix any $y\in X\cap H_k$. Since $x^k \in X$ but $x^k\notin H_k$ by Proposition \ref{H-separa-x}, there exists $\gamma \in [0,1]$, such that $\tilde{x}=\gamma x^k+(1-\gamma)y\in X\cap \partial H_k$. Hence,
\begin{eqnarray}
\|y-P_{H_k}(x^k)\|^2&\geq & (1-\gamma)^2\|y-P_{H_k}(x^k)\|^2\nonumber\\
&=& \|\tilde{x}-\gamma x^k-(1-\gamma) P_{H_k}(x^k)\|^2\nonumber\\
&=& \|\tilde{x}-P_{H_k}(x^k)\|^2+\gamma^2\|x^k-P_{H_k}(x^k)\|^2 -2\gamma\la\tilde{x}-P_{H_k}(x^k),x^k-P_{H_k}(x^k)\ra\nonumber\\
&\geq& \|\tilde{x}-P_{H_k}(x^k)\|^2,\label{zetabar}
\end{eqnarray}
where the last inequality follows from Proposition \ref{proj}(ii), applied with $X=H_k$, $x=x^k$ and $z=\tilde{x}\in H_k$. Furthermore, we have
\begin{eqnarray}
\|\tilde{x}-P_{H_k}(x^k)\|&\geq&\|\tilde{x}-x^k\|-\|x^k-P_{H_k}(x^k)\|\nonumber\\
&\geq& \|x^{k+1}-x^k\|-\|x^k-P_{H_k}(x^k)\|\nonumber\\
&\geq&  \|x^{k+1}-x^k\|\nonumber\\
&\geq& \|x^{k+1}-P_{H_k}(x^k)\|\label{proye},
\end{eqnarray}
where the first equality follows by the triangle inequality, using the fact that $\tilde{x}\in X\cap H_k$ and $x^{k+1}=P_{X\cap H_k}(x^k)$ in the second inequality, the third one is trivial, and the last one inequality by the fact that $x^{k+1}\in H_k$ and Proposition \ref{proj}(i) with $X=H_k$. Combining (\ref{zetabar}) and (\ref{proye}), we obtain
\begin{equation*}
\|y-P_{H_k}(x^k)\|\geq \|x^{k+1}-P_{H_k}(x^k)\|,
\end{equation*}
for all $y\in X\cap H_k$.
Hence, $x^{k+1}=P_{X\cap H_k}(P_{H_k}(x^k))$.

\medskip

\noindent (ii) Take $x^*\in X\cap S^*$. By item (i), Lemma \ref{propseq} and Proposition \ref{proj}(i), we have
\begin{equation*}
\|x^{k+1}-x^*\|^2=\|P_{X\cap H_k}(P_{H_k}(x^k))-P_{X\cap H_k}(x^*)\|^2\leq \|P_{H_k}(x^k)-x^*\|^2.
\end{equation*}

\noindent The proof is similar to the proof of Proposition \ref{cadai}.
\end{proof}

\noindent Finally we present the convergence result for {\bf Variant \ref{A2}}.
\begin{theorem}\label{teo2}
The sequence $\{x^k\}_{k\in \NN}$ converges to some point belonging to $S^*\cap X$.
\end{theorem}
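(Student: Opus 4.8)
The plan is to mirror the two-stage argument used for Theorem~\ref{teo1}. Proposition~\ref{fe} already supplies the two structural facts I need: the sequence $\{x^k\}_{k\in\NN}$ is Fej\'er convergent to $S^*\cap X$ and is bounded. By Proposition~\ref{punto}(iii), Fej\'er convergence reduces the whole problem to exhibiting a \emph{single} cluster point of $\{x^k\}_{k\in\NN}$ lying in $S^*\cap X$; once such a cluster point is produced, the entire sequence converges to it. Boundedness (Proposition~\ref{fe}) guarantees that cluster points exist.

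First I would pass to a subsequence $\{x^{j_k}\}_{k\in\NN}$ along which $x^{j_k}\to\tilde x$ and, for every $i\in\mathbb{I}$, the auxiliary sequences $\{\bar x_i^{j_k}\}$, $\{\bar u_i^{j_k}\}$, $\{\alpha_{j_k,i}\}$ and $\{\beta_{j_k}\}$ all converge; this is possible because the multipliers lie in the compact ball $B[0,R]$, $\{\beta_k\}\subseteq[\check\beta,\hat\beta]$, and the boundedness of $\{\bar x_i^k\}$ follows from continuity of $J_i$ as in Proposition~\ref{cadai}. The decisive input is the analogue of Proposition~\ref{cadai} just established for Variant~\ref{A2}, namely $\lim_{k\to\infty}\la A_i(\bar x_i^k)+\bar u_i^k,\,x^k-\bar x_i^k\ra=0$ for all $i$. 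Feeding this into the chain \eqref{desig-muy-usada} of Corollary~\ref{coro} and taking limits over $\{j_k\}$ yields $\lim_{k\to\infty}\alpha_{j_k,i}\,\|x^{j_k}-J_i(x^{j_k},\beta_{j_k})\|=0$ for each $i\in\mathbb{I}$.

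From here the analysis splits exactly as in Theorem~\ref{teo1}. In case (a), $\liminf_k\alpha_{j_k,i}>0$ forces $\|x^{j_k}-J_i(x^{j_k},\beta_{j_k})\|\to0$, and the continuity of $J_i$ (Proposition~\ref{inversa} together with continuity of $A_i$) gives $\tilde x=J_i(\tilde x,\tilde\beta)$, hence $\tilde x\in S_i^*$. In case (b), $\alpha_{j_k,i}\to0$; here I would use that the Armijo exponent $j_i(k)-1$ failed test \eqref{jk}, producing the reversed inequality evaluated at $y_i^{j_k}=\tfrac{\alpha_{j_k,i}}{\theta}J_i(x^{j_k},\beta_{j_k})+(1-\tfrac{\alpha_{j_k,i}}{\theta})x^{j_k}\to\tilde x$. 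Passing to the limit, using maximality of $B_i$ to place the limiting multiplier in $B_i(\tilde x)$, and then invoking the monotonicity of $B_i$ collapses the resulting inequality to $\|\tilde x-J_i(\tilde x,\tilde\beta)\|^2\le\delta\|\tilde x-J_i(\tilde x,\tilde\beta)\|^2$ with $\delta\in(0,1)$, so again $\tilde x=J_i(\tilde x,\tilde\beta)$ and $\tilde x\in S_i^*$.

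Since the two cases are exhaustive and hold for every $i\in\mathbb{I}$, we obtain $\tilde x\in\cap_{i\in\mathbb{I}}S_i^*=S^*$; and because $X$ is closed and $x^{j_k}\in X$, the limit $\tilde x$ lies in $X$, so $\tilde x\in S^*\cap X$. Proposition~\ref{punto}(iii) then upgrades this single good cluster point into convergence of the whole sequence, which is the claim. I expect no genuinely new difficulty beyond Theorem~\ref{teo1}: the only nonroutine step is case (b), where one must correctly identify the ``previous'' linesearch inequality and control the multiplier $u^{j_k}_{j(j_k)-1,i}$ in the limit via maximal monotonicity of $B_i$ — but this is precisely the mechanism already used for Variant~\ref{A1}, the projection formula \eqref{P122} having entered only through the preceding proposition that produced the limit $\lim_{k\to\infty}\la A_i(\bar x_i^k)+\bar u_i^k,x^k-\bar x_i^k\ra=0$.
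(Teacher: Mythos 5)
Your proposal is correct and follows exactly the route the paper intends: the paper's own proof of Theorem~\ref{teo2} literally says ``Repeat the proof of Theorem~\ref{teo1}'', and your argument is precisely that repetition, with the Fej\'er convergence and boundedness supplied by Proposition~\ref{fe} and the key limit $\lim_{k\to\infty}\la A_i(\bar x_i^k)+\bar u_i^k,x^k-\bar x_i^k\ra=0$ supplied by the proposition preceding the theorem. No discrepancy to report.
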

\begin{proof}
Repeat the proof of Theorem \ref{teo1}.
\end{proof}

\subsection{Convergence analysis of Variant \ref{A3}}\label{sec-5.3}
In this section, all results are for {\bf Variant \ref{A3}}, which is summarized below.

\vspace*{-0.2in}
\begin{center}\fbox{\begin{minipage}[b]{\textwidth}
\begin{variant}{A.3}
\label{A3}$x^{k+1}=\mcF_{\rm\ref{A3}}(x^k)=P_{X\cap H_k\cap W(x^k)}(x^0)$
\end{variant}\end{minipage}}\end{center}
\begin{proposition}
If {\bf Variant \ref{A3}} stops, then $x^k\in S^*\cap X$.
\end{proposition}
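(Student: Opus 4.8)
The plan is to follow the short template of Propositions~\ref{stop1} and~\ref{stop2}, using the single elementary fact that an orthogonal projection always lands inside its (nonempty, closed, convex) target set. Stopping of \textbf{Variant~\ref{A3}} can occur at Step~5, where Stop Criteria~2 reads $x^{k+1}=x^k$; by the defining formula \eqref{P132}, $x^{k+1}=P_{X\cap H_k\cap W(x^k)}(x^0)$, so the hypothesis is exactly $x^k=P_{X\cap H_k\cap W(x^k)}(x^0)$.

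First I would record that the projected point belongs to the set onto which it is projected, hence $x^k\in X\cap H_k\cap W(x^k)$. In particular $x^k\in H_k$ and $x^k\in X$. Applying Proposition~\ref{H-separa-x} to $x^k\in H_k$ gives $x^k\in S^*$, and together with $x^k\in X$ this yields $x^k\in S^*\cap X$, which is the assertion. For completeness I would also treat the other exit, Stop Criteria~1 at Step~2: if $\mathbb{I}_k^*=\mathbb{I}$ then $x^k=J_i(x^k,\beta_k)$ for every $i\in\mathbb{I}$, so Proposition~\ref{parada} gives $0\in(A_i+B_i)(x^k)$ for all $i$, i.e. $x^k\in S^*$; since each iterate produced by \eqref{P132} lies in $X$, once more $x^k\in S^*\cap X$.

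The argument has no serious obstacle; the only point requiring care is the tacit well-definedness of the projection, namely that $X\cap H_k\cap W(x^k)\neq\emptyset$ (the analogue of the condition $X\cap H_k\neq\emptyset$ invoked in Proposition~\ref{stop1}). I would justify it exactly as the hybrid-projection analysis of \textbf{Variant~\ref{A3}} must: $S^*\cap X\neq\emptyset$ by the standing assumption, $S^*\subseteq H_k$ by Lemma~\ref{propseq}, and $S^*\cap X\subseteq W(x^k)$ by the usual inductive feasibility argument — $x^k$ is the projection of $x^0$ onto $X\cap H_{k-1}\cap W(x^{k-1})$, so Proposition~\ref{proj}(ii) gives $\la x^0-x^k,x^*-x^k\ra\le 0$ for every $x^*\in S^*\cap X\subseteq X\cap H_{k-1}\cap W(x^{k-1})$, which is precisely $x^*\in W(x^k)$. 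Granting this feasibility, the conclusion is immediate.
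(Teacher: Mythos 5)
Your proof is correct and follows essentially the same route as the paper: from $x^{k}=x^{k+1}=P_{X\cap H_k\cap W(x^k)}(x^0)$ one reads off $x^k\in X\cap H_k\cap W(x^k)\subset X\cap H_k$, and Proposition~\ref{H-separa-x} then gives $x^k\in S^*\cap X$. Your additional remarks on the Step~2 exit and on the nonemptiness of $X\cap H_k\cap W(x^k)$ are sound (the latter is exactly Lemma~\ref{lemma:3} in the paper) but do not change the argument.
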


\begin{proof}
If Stop Criteria 2 is satisfied then, $x^{k+1}=P_{X\cap H_k\cap W_k}(x^0)=x^k$. So, $x^k\in X\cap H_k\cap W_k\subset X\cap H_k $ and finally using Proposition \ref{H-separa-x}, $x^k\in S^*\cap X$.
\end{proof}

From now on we assume that {\bf Variant \ref{A3}} does not stop. Observe that, in virtue of their definitions, $W_k$ and $H_k$ are convex and closed sets, for each $k$. Therefore $X\cap H_k\cap W_k$ is a convex and closed set. So, if $X\cap H_k\cap W_k$ is nonempty, then the next iterate, $x^{k+1}$, is well-defined.
The following lemma guarantees this fact.

\begin{lemma}\label{lemma:3} $S^*\cap X\subset  H_k\cap W_k$, for all $k$.
\end{lemma}
\begin{proof} We proceed by induction. By definition, $S^*\cap X \neq \emptyset $.   By Lemma \ref{propseq}, $S^*\cap X\subset  H_k$, for all $k$.
For $k=0$, as $W_0=\RR^n$, $S^*\cap X\subset H_0\cap W_0$.

Assume that $S^*\cap X\subset H_\ell \cap W_\ell$, for $\ell\leq k$.
Henceforth, $x^{k+1}=P_{X\cap H_k\cap W_k}(x^0)$ is well-defined.
Then, by Proposition \ref{proj}(ii), we have
\begin{equation}\label{x*ink+1}
\langle x^*-x^{k+1}\,,\, x^0-x^{k+1}\rangle=\langle x^*-P_{X\cap
H_k\cap W_k}(x^0)\,,\, x^0-P_{X\cap H_k\cap W_k}(x^0)\rangle\leq0,
\end{equation}
for all $x^*\in S^*\cap X$. The inequality follows by the induction hypothesis. Now, \eqref{x*ink+1} implies that
$x^*\in W_{k+1}$ and hence, $ S^*\cap X \subset H_{k+1}\cap W_{k+1}$.
\end{proof}

The above lemma shows that the set $X\cap H_k\cap W_k$ is  nonempty and in consequence the projection step, given in  \eqref{P132}, is well-defined.
\begin{corollary}\label{l:well-definedness}  {\bf Variant \ref{A3}} is well-defined.
\end{corollary}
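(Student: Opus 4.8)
The plan is to deduce well-definedness of Variant \ref{A3} by combining two ingredients already available: the well-definedness of the conceptual algorithm from Proposition \ref{propdef}, and the nonemptiness of the feasible set of the projection step from Lemma \ref{lemma:3}. Concretely, well-definedness of Variant \ref{A3} splits into two claims: that the interior steps (computing each $J_i(x^k,\beta_k)$, forming $\mathbb{I}_k^*$, and exiting the inner loop with a finite index $j_i(k)$) are well-defined, and that the outer step $x^{k+1}=P_{X\cap H_k\cap W_k}(x^0)$ produces a uniquely determined point.

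For the interior steps I would simply invoke Proposition \ref{propdef}. These steps are common to all three variants and do not depend on the choice of $\mathcal{F}$; in particular Proposition \ref{parada} makes Stopping Test~1 meaningful, and the contradiction argument of Proposition \ref{propdef} shows the inner loop halts after finitely many iterations for each $i\in\mathbb{I}\setminus\mathbb{I}_k^*$, so $\alpha_{k,i}$, $\bar{u}_i^k$ and $\bar{x}_i^k$ are well-defined. For the outer step I would verify that $X\cap H_k\cap W_k$ is a nonempty, closed and convex set, since the orthogonal projection onto such a set exists and is unique by its very definition. Closedness and convexity are structural: $X$ is closed and convex by hypothesis, each $H_i(\bar{x}_i^k,\bar{u}_i^k)$ is a halfspace by \eqref{H(x)} so their finite intersection $H_k$ is closed and convex, and $W_k$ is a halfspace by \eqref{W(x)}; a finite intersection of closed convex sets is closed and convex. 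Nonemptiness is exactly the content of Lemma \ref{lemma:3}: since $S^*\cap X\subset H_k\cap W_k$ for all $k$ and $S^*\cap X\neq\emptyset$ by the standing assumption $X\cap S^*\neq\emptyset$, we obtain $\emptyset\neq S^*\cap X\subseteq X\cap H_k\cap W_k$.

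The main obstacle is precisely the nonemptiness of $X\cap H_k\cap W_k$, which is nontrivial because $W_k$ depends on the current iterate $x^k$ through \eqref{W(x)} and could a priori excise part of the solution set. This is resolved by the inductive argument of Lemma \ref{lemma:3}, where the induction hypothesis $S^*\cap X\subset H_\ell\cap W_\ell$ for $\ell\leq k$ is fed into Proposition \ref{proj}(ii) to show that every $x^*\in S^*\cap X$ satisfies $\langle x^*-x^{k+1},x^0-x^{k+1}\rangle\leq0$, that is, $x^*\in W_{k+1}$. Since Lemma \ref{lemma:3} is already established, assembling the two claims above completes the proof: at every stage the feasible set of \eqref{P132} is a nonempty closed convex set, so $x^{k+1}=P_{X\cap H_k\cap W_k}(x^0)$ is well-defined, and therefore so is Variant \ref{A3}.
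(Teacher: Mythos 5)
Your proposal is correct and follows essentially the same route as the paper: the paper's proof of this corollary simply invokes Lemma \ref{lemma:3} for nonemptiness of $X\cap H_k\cap W_k$ (with the closedness/convexity observation and the well-definedness of the inner steps via Proposition \ref{propdef} handled in the surrounding text), which is exactly what you assemble, just more explicitly.
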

\begin{proof} By Lemma \ref{lemma:3} , $ S^*\cap X\subset H_k\cap W_k$, for  all $k$. Then,  given $x^0$, the sequence $\{x^k\}_{k\in \NN}$ is computable.
\end{proof}

Before proving the convergence of the sequence, we study its
boundedness. The next lemma  shows that  the sequence remains in a
ball determined by the initial point.

\begin{lemma}\label{l:limitacao} The sequence $\{x^k\}_{k\in \NN}$ is bounded. Furthermore,
\begin{equation*}\label{eq:bolas}
\{x^k\}_{k\in \NN}\subset  B\left[\frac{1}{2}(x^0+\bar{x}),\frac{1}{2}\rho\right]\cap X,
\end{equation*}
where $\bar{x}=P_{S^*\cap X}(x^0)$ and $\rho={\rm dist}(x^0,
S^*\cap X)$.
\end{lemma}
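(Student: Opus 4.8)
The plan is to recognize that the ball $B\left[\frac{1}{2}(x^0+\bar{x}),\frac{1}{2}\rho\right]$ is exactly the ball having the segment $[x^0,\bar{x}]$ as a diameter, and then to reduce the containment to a single application of the projection inequality. First I would record the elementary characterization: since $\rho=\|x^0-\bar{x}\|$, a point $y$ lies in the ball centered at $\frac{1}{2}(x^0+\bar{x})$ of radius $\frac{1}{2}\rho$ if and only if
\[
\langle y-x^0,\, y-\bar{x}\rangle \le 0.
\]
This follows from a one-line expansion of the defining inequality $\big\|y-\tfrac{1}{2}(x^0+\bar{x})\big\|^2 \le \tfrac{1}{4}\|x^0-\bar{x}\|^2$, so I would simply state and verify it.

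Next I would invoke Lemma \ref{lemma:3}, which gives $S^*\cap X\subset H_k\cap W_k$ for all $k$. Since $\bar{x}=P_{S^*\cap X}(x^0)\in S^*\cap X$ and $S^*\cap X\subset X$, the point $\bar{x}$ lies in $X\cap H_k\cap W_k$ for every $k$. Fixing $k\ge 0$ and using $x^{k+1}=P_{X\cap H_k\cap W_k}(x^0)$ together with Proposition \ref{proj}(ii) applied to the set $X\cap H_k\cap W_k$ with the admissible point $z=\bar{x}$, I would obtain
\[
\langle x^0-x^{k+1},\, \bar{x}-x^{k+1}\rangle \le 0,
\]
which is precisely $\langle x^{k+1}-x^0,\, x^{k+1}-\bar{x}\rangle \le 0$. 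By the characterization above this places $x^{k+1}$ in the claimed ball, and since $x^{k+1}\in X\cap H_k\cap W_k\subset X$ it lies in the intersection with $X$. The base case $k=0$ is immediate, because $\langle x^0-x^0,\, x^0-\bar{x}\rangle=0$ and $x^0\in X$ by Step~0.

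Having established $\{x^k\}_{k\in \NN}\subset B\left[\frac{1}{2}(x^0+\bar{x}),\frac{1}{2}\rho\right]\cap X$, boundedness follows at once, completing the proof. The main (and essentially only non-routine) obstacle is the geometric identification of the ball: seeing that the center–radius description in the statement coincides with the Thales-type inner-product inequality $\langle y-x^0,\, y-\bar{x}\rangle\le 0$. Once this is in place, the argument is a direct application of the defining projection inequality in Proposition \ref{proj}(ii) together with the membership $\bar{x}\in X\cap H_k\cap W_k$ furnished by Lemma \ref{lemma:3}; no induction beyond what that lemma already supplies is required.
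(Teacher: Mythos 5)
Your proof is correct and follows essentially the same route as the paper's: the key inequality $\la x^0-x^{k+1},\bar{x}-x^{k+1}\ra\le 0$ is exactly what the paper extracts from $\bar{x}\in W_{k+1}$ (itself a consequence of Lemma \ref{lemma:3} via Proposition \ref{proj}(ii)), and your Thales-type characterization of the ball is the same algebra as the paper's change of variables $z^{k}=x^{k}-\frac{1}{2}(x^0+\bar{x})$. The only cosmetic difference is that the paper first establishes boundedness separately from $\|x^{k+1}-x^0\|\le\rho$, whereas you obtain it directly from the ball containment.
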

\begin{proof}
$S^*\cap X \subset H_k \cap W_k$ follows from Lemma \ref{lemma:3}. Moreover, from \eqref{P132}, we obtain that
\begin{equation}\label{eq:12}
\| x^{k+1}-x^0\| \leq\| z-x^0\|,
\end{equation}
for all $k$ and all $z\in S^*\cap X$. Henceforth, taking $z=\bar{x}$ in \eqref{eq:12},
\begin{equation}\label{eq:yunier}
 \| x^{k+1}-x^0\|\leq\|\bar{x}-x^0\|=\rho,
\end{equation}
for all $k$. Thus, $\{x^k\}_{k\in \NN}$ is bounded.
Define $z^{k}=x^{k}-\frac{1}{2}(x^0+\bar{x})$ and $\bar{z}=\bar{x}-\frac{1}{2}(x^0+\bar{x})$. It follows from the fact $\bar{x}\in W_{k+1}$, that
\begin{eqnarray*}
0&\geq& 2\la \bar{x}-x^{k+1},x^0-x^{k+1}\ra \\&=&2\left\la
\bar{z}+ \frac{1}{2}(x^0+\bar{x})-z^{k+1}-\frac{1}{2}(x^0+\bar{x}),z^0+\frac{1}{2}(x^0+\bar{x})-z^{k+1}-\frac{1}{2}(x^0+\bar{x})\right\ra\\&=&2\left\la
\bar{z}-z^{k+1},z^0-z^{k+1}\right\ra=\left\la
\bar{z}-z^{k+1},-\bar{z}-z^{k+1}\right\ra = \|z^{k+1}\|^2-\|\bar{z}\|^2,
\end{eqnarray*}
where we have used that $\bar{z}=-z^0$ in the third equality. So,
\begin{equation*}\label{eq:raio}
\left \|x^{k+1}-\frac{x^0+\bar{x}}{2}\right\|\leq\left\|\bar{x}-\frac{x^0+\bar{x}}{2}\right\|=\frac{\rho}{2},
\end{equation*}
for all $k$. Now, the result follows from the feasibility of $\{x^k\}_{k\in \NN}$, which, in turn, is a consequence of \eqref{P132}.
\end{proof}

\noindent  Now, we focus on the properties of the accumulation points.

\begin{lemma}\label{l:optimalidad} All accumulation points of $\{x^k\}_{k\in \NN}$ belong to $S^*\cap X$.
\end{lemma}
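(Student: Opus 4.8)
The plan is to exploit the special structure of Variant \ref{A3}, where the iterate is the projection of the \emph{fixed} point $x^0$ onto $X\cap H_k\cap W(x^k)$. The key consequence of using $W(x^k)$ is that it forces the sequence $\{\|x^{k+1}-x^k\|\}$ to vanish, which is the engine that drives the argument. First I would establish this: since $x^{k+1}=P_{X\cap H_k\cap W_k}(x^0)\in W_k$, the defining inequality of $W(x^k)$ in \eqref{W(x)} gives $\la x^{k+1}-x^k, x^0-x^k\ra\le 0$. Combined with the monotone boundedness of $\{\|x^k-x^0\|\}$ coming from \eqref{eq:yunier} in Lemma \ref{l:limitacao}, a standard computation (expand $\|x^{k+1}-x^k\|^2=\|x^{k+1}-x^0\|^2-\|x^k-x^0\|^2+2\la x^{k+1}-x^k,x^0-x^k\ra$) yields $\lim_{k\to\infty}\|x^{k+1}-x^k\|=0$.

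Next, since $\{x^k\}_{k\in\NN}$ is bounded by Lemma \ref{l:limitacao}, it has accumulation points; let $\tilde{x}$ be one, realized as the limit of a subsequence $\{x^{j_k}\}$. Passing to a further subsequence, I would arrange that $\{\bar{x}_i^{j_k}\}$, $\{\bar{u}_i^{j_k}\}$, $\{\alpha_{j_k,i}\}$ and $\{\beta_{j_k}\}$ all converge for every $i\in\mathbb{I}$; the boundedness of the $\bar{u}_i^{j_k}$ is guaranteed because they were chosen in $B[0,R]$. The goal is then to show $\tilde{x}\in S^*\cap X$. Membership in $X$ is immediate from the feasibility $x^k\in X$ and closedness of $X$. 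For $\tilde{x}\in S^*$, I would mimic the endgame of Theorem \ref{teo1}: the crucial algebraic inequality \eqref{desig-muy-usada} from Corollary \ref{coro} holds along the sequence, and one needs the left-hand inner product to tend to zero along the subsequence in order to conclude $\alpha_{j_k,i}\|x^{j_k}-J_i(x^{j_k},\beta_{j_k})\|\to 0$, after which the two-case analysis ($\lim\alpha_{j_k,i}\ne0$ versus $=0$) of Theorem \ref{teo1} applies verbatim to give $\tilde{x}=J_i(\tilde{x},\tilde{\beta})$, hence $\tilde{x}\in S_i^*$ for all $i$, hence $\tilde{x}\in S^*$.

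The step I expect to be the main obstacle is establishing that $\la A_i(\bar{x}_i^k)+\bar{u}_i^k, x^k-\bar{x}_i^k\ra\to 0$ along the subsequence, since here the clean Fej\'er telescoping argument of Proposition \ref{cadai} (which used the projection onto a single half-space relative to the \emph{previous} iterate) is not directly available for the $x^0$-projection scheme. The way around this is to use $\|x^{k+1}-x^k\|\to0$ together with the fact that $x^{k+1}\in H_k\subseteq H_i(\bar{x}_i^k,\bar{u}_i^k)$: the distance from $x^k$ to the half-space $H_i(\bar{x}_i^k,\bar{u}_i^k)$ is exactly $\la A_i(\bar{x}_i^k)+\bar{u}_i^k,x^k-\bar{x}_i^k\ra/\|A_i(\bar{x}_i^k)+\bar{u}_i^k\|$, and this distance is bounded above by $\|x^k-x^{k+1}\|$ because $x^{k+1}$ lies in that half-space. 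Since $\{\|A_i(\bar{x}_i^k)+\bar{u}_i^k\|\}$ is bounded (by continuity of $A_i$, boundedness of $\{\bar{x}_i^k\}$ via continuity of $J_i$, and $\bar{u}_i^k\in B[0,R]$), letting $k\to\infty$ forces the inner product to zero. With this in hand the remainder of the argument is a transcription of Theorem \ref{teo1}'s case analysis, and the lemma follows.
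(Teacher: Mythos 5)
Your proposal is correct and follows essentially the same route as the paper's proof: both derive $\lim_{k\to\infty}\|x^{k+1}-x^k\|=0$ from $x^{k+1}\in W_k$ together with the monotonicity and boundedness of $\{\|x^k-x^0\|\}$, both then exploit $x^{k+1}\in H_k\subseteq H_i(\bar{x}_i^k,\bar{u}_i^k)$ and the boundedness of $\{A_i(\bar{x}_i^k)+\bar{u}_i^k\}$ to force $\alpha_{k,i}\|x^k-J_i(x^k,\beta_k)\|^2\to 0$, and both finish with the two-case analysis of Theorem \ref{teo1}. The only cosmetic difference is that you phrase the key estimate as a distance-to-half-space bound via Cauchy--Schwarz, whereas the paper expands $x^{k+1}-\bar{x}_i^k=(x^{k+1}-x^k)+\alpha_{k,i}(x^k-J_i(x^k,\beta_k))$ and invokes the linesearch inequality \eqref{jk} directly; these are the same estimate.
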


\begin{proof}
Since $x^{k+1}\in W_k$,
\begin{equation*}
0\geq 2 \la
x^{k+1}-x^k,x^0-x^k\ra=\|x^{k+1}-x^k\|^2-\|x^{k+1}-x^0\|^2+\|x^k-x^0\|^2.
\end{equation*}
Equivalently $$0\leq\|x^{k+1}-x^k\|^2\leq\|x^{k+1}-x^0\|^2-\|x^k-x^0\|^2,$$ establishing that the sequence $\{\|x^k-x^0\|\}_{k\in \NN}$ is monotone and nondecreasing.
From Lemma \ref{l:limitacao}, we get that $\{\|x^k-x^0\|\}_{k\in \NN}$ is bounded, and thus, convergent. Therefore,
\begin{equation}\label{xk+1-xk-va-cero}
\lim_{k\rightarrow\infty}\| x^{k+1}-x^k\|=0.
\end{equation}
Since $x^{k+1}\in H_k$, we get for all $i\in \mathbb{I}$ that,
\begin{equation}\label{ptos-in-s*}
 \la A_i(\bar{x}_i^{k})+\bar{u}_i^{k},x^{k+1}-\bar{x}_i^{k}\ra\le 0,
\end{equation}
with $\bar{u}_i^k$ and $\bar{x}_i^k$ as \eqref{ubar} and \eqref{xbar}.

\noindent Using \eqref{xbar} and \eqref{ptos-in-s*}, we have
\begin{equation*}
\la A_i(\bar{x}_i^{k})+\bar{u}_i^{k},x^{k+1}-x^{k}\ra + \alpha_{k,i}\big{\la} A_i(\bar{x}_i^{k})+\bar{u}_i^{k},x^{k}-J_i(x^{k},\beta_{k})\big{\ra}\le 0.
\end{equation*}
Combining the above inequality with the stop criteria of Inner Loop, given in \eqref{jk}, we get for all $i\in \mathbb{I}$
\begin{equation}\label{eq}
\la A_i(\bar{x}_i^{k})+\bar{u}_i^{k},x^{k+1}-x^{k}\ra+\frac{\alpha_{k,i}\delta}{\hat{\beta}}\|x^{k} - J_i(x^{k},\beta_{k})\|^2\leq 0.
\end{equation}
Choosing a subsequence $\{j_k\}_{k\in \NN}$ such that the subsequences $\{x^{j_k}\}_{k\in \NN}$, $\{\beta_{j_k}\}_{k\in \NN}$ and $\{\bar{u}_i^{j_k}\}_{k\in \NN}$ converge to  $\tilde{x}$, $\tilde{\beta}$ and $\tilde{u}_i$ respectively. This is possible by the boundedness of $\{\bar{u}_i^k\}_{k\in \NN}$, by hypothesis on $B_i$, bounded of $\{x^{k}\}_{k\in \NN}$ and $\{\beta_{k}\}_{k\in \NN}$. Taking limits in \eqref{eq}, we have
 \begin{equation}\label{zero}
\lim_{k\to \infty}\alpha_{j_k,i}\|x^{j_k} - J_i(x^{j_k},\beta_{j_k})\|^2=0.
\end{equation}
Now we consider two cases, $\lim_{k\to \infty} \alpha_{j_k,i}=0$ or $\lim_{k\to \infty} \alpha_{j_k,i}\neq 0$ (taking a subsequence again if necessary).

(a) $\lim_{k\to\infty} \alpha_{j_k,i} \neq 0$, i.e., for all $i\in \mathbb{I}$, $\alpha_{j_k,i}\geq \tilde{\alpha_i}$ for all $k$ and some $\tilde{\alpha_i}>0$. By \eqref{zero},
\begin{equation*}
\lim_{k\to \infty}\|x^{i_k} - J(x^{i_k},\beta_{i_k})\|^2=0.
\end{equation*}
By continuity of $J_i$, we have $\tilde{x}=J_i(\tilde{x},\tilde{\beta})$ and hence by Proposition \ref{parada}, $\tilde{x}\in S_i^*$ for all $i\in \mathbb{I}$, therefor $\tilde{x}\in S_*$.

(b)  $\lim_{k\to\infty} \alpha_{j_k,i} = 0$, then $\lim_{k\to \infty}\frac{\alpha_{j_k,i}}{\theta}=0$. It follows in the same, manner as in the proof of Theorem \ref{teo1}(b).
\end{proof}

\noindent Finally, we are ready to prove the convergence of the sequence $\{x^k\}_{k\in \NN}$ generated by {\bf Variant \ref{A3}}, to the solution closest to
$x^0$.

\begin{theorem}  Define $\bar{x}=P_{S^*\cap X}(x^0)$. Then, $\{x^k\}_{k\in \NN}$ converges to $\bar{x}$.
\end{theorem}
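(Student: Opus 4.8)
The plan is to leverage the three facts already established for \textbf{Variant \ref{A3}}: the sequence is bounded (Lemma \ref{l:limitacao}), all of its accumulation points lie in $S^*\cap X$ (Lemma \ref{l:optimalidad}), and it satisfies the uniform distance bound $\|x^{k+1}-x^0\|\le\rho$ from \eqref{eq:yunier}, where $\rho={\rm dist}(x^0,S^*\cap X)=\|\bar{x}-x^0\|$. From these I would deduce that every accumulation point must coincide with $\bar{x}$, after which convergence of the whole sequence is automatic.

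First I would invoke Lemma \ref{l:limitacao} to guarantee that $\{x^k\}_{k\in\NN}$ admits at least one accumulation point. Let $\tilde{x}$ be any such point, say $x^{j_k}\to\tilde{x}$ along a subsequence. By Lemma \ref{l:optimalidad}, $\tilde{x}\in S^*\cap X$. Passing to the limit in \eqref{eq:yunier} along this subsequence yields $\|\tilde{x}-x^0\|\le\rho$. On the other hand, since $\tilde{x}\in S^*\cap X$ and $\rho$ is by definition the distance from $x^0$ to this set, we also have $\|\tilde{x}-x^0\|\ge\rho$. Hence $\|\tilde{x}-x^0\|=\rho=\|\bar{x}-x^0\|$.

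Next I would use the uniqueness of the metric projection onto the nonempty, closed and convex set $S^*\cap X$: by definition $\bar{x}=P_{S^*\cap X}(x^0)$ is the \emph{unique} point of $S^*\cap X$ realizing the distance $\rho$ to $x^0$. Since $\tilde{x}\in S^*\cap X$ also realizes this distance, it follows that $\tilde{x}=\bar{x}$. Thus $\bar{x}$ is the only accumulation point of $\{x^k\}_{k\in\NN}$.

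Finally, a bounded sequence in $\RR^n$ possessing exactly one accumulation point converges to that point; therefore $\{x^k\}_{k\in\NN}$ converges to $\bar{x}$, completing the proof. The one place requiring care is this last implication — that a \emph{unique} accumulation point forces convergence of the entire sequence, not merely of a subsequence. This must be argued rather than read off from the existence of a convergent subsequence: if no such convergence held, some subsequence would remain outside a fixed ball around $\bar{x}$, and the boundedness from Lemma \ref{l:limitacao} would extract from it a further accumulation point distinct from $\bar{x}$, contradicting the uniqueness just obtained. I expect this to be the only genuine obstacle, the rest being direct consequences of the preceding lemmas.
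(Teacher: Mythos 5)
Your proof is correct and follows essentially the same route as the paper: boundedness from Lemma \ref{l:limitacao}, membership of every accumulation point in $S^*\cap X$ from Lemma \ref{l:optimalidad}, and identification of that point with $\bar{x}$ via the uniqueness of the metric projection. The only cosmetic difference is that you work directly with the bound $\|x^{k}-x^0\|\le\rho$ from \eqref{eq:yunier} rather than with the ball $B\left[\frac{1}{2}(x^0+\bar{x}),\frac{1}{2}\rho\right]$, and you spell out the final step (unique accumulation point of a bounded sequence implies convergence) which the paper leaves implicit.
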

\begin{proof}   By  Lemma \ref{l:limitacao}, $\{x^k\}_{k\in \NN}\subset  B\left[\frac{1}{2}(x^0+\bar{x}),\frac{1}{2}\rho\right]\cap X$, so it is bounded. Let
$\{x^{j_k}\}_{k\in \NN}$ be a convergent subsequence of $\{x^k\}_{k\in \NN}$,
and let $\hat{x}$ be its  limit. Evidently $\hat{x}\in  B\left[\frac{1}{2}(x^0+\bar{x}),\frac{1}{2}\rho\right]\cap X$. Furthermore, by Lemma \ref{l:optimalidad}, $\hat{x}\in S^*\cap X$.
Then, $$\hat{x}\in S^* \cap X \cap  B \left[\frac{1}{2}(x^0+\bar{x}),\frac{1}{2}\rho\right]=\{\bar{x}\},$$ implying that $\hat{x}=\bar{x}$, hence $\bar{x}$ is the
unique limit point of $\{x^k\}_{k\in \NN}$. Thus,
$\{x^{k}\}_{k\in \NN}$ converges   to $\bar{x}\in S^*\cap X$. \end{proof}

\section{Conclusions}
In this paper, we present a variant of forward-backward splitting methods for solving a system o inclusion problems composed by the sum of two operators. A conceptual algorithm have been proposed containing three variants with different projections steps. A linesearch, for relax the hypothesis of Lipschitz continuity on forwards operators, have been proposed. The convergence analyse of three variant are discussed. The results presented here, improve the previous in the literature by relaxing the hypothesis.


\bibliographystyle{plain}

\end{document}